\newtheorem{thm}{Theorem}[section]
\newtheorem{Prop}[thm]{Proposition}
\newtheorem{Theo}[thm]{Theorem}
\newtheorem{Lem}[thm]{Lemma}
\newtheorem{Koro}[thm]{Corollary}
\theoremstyle{definition}
\newtheorem{Def}[thm]{Definition}
\newtheorem{Bsp}[thm]{Example}
\newcommand{\A}{{\mathcal{A}}}
\newcommand{\B}{{\mathcal B}}
\newcommand{\C}{{\mathcal C}}
\newcommand{\D}{\mathcal D}
\newcommand{\T}{{\mathscr T}}
\newcommand{\X}{{\mathscr X}}
\newcommand{\Y}{{\mathscr Y}}
\newcommand{\Z}{{\mathscr Z}}
\newcommand{\HH}{{\mathcal H}}
\newcommand{\cpx}[1]{#1^{\bullet}}
\renewcommand{\H}{\mathsf{H}}
\newcommand{\Hom}{\mathsf{Hom}}
\newcommand{\RHom}{\mathsf{RHom}}
\newcommand{\Prod}{\mathsf{Prod}}
\title{A note on gluing cosilting objects}
\author[Y. Sun \& Y. Zhang]{Yongliang Sun and Yaohua Zhang*}
\address{\normalfont{Yongliang Sun \\School of Mathematics and Physics, Yancheng Institute of Technology, Jiangsu 224003, China}}
\email{syl13536@126.com}
\address{\normalfont{Yaohua Zhang \\ Hubei Key Laboratory of Applied Mathematics, Faculty of Mathematics and Statistics, Hubei University, Wuhan, 430062, China}}
\email{2160501008@cnu.edu.cn}
\thanks{* Corresponding author.}
\keywords{cosilting objects, recollements, cosilting complexes, $t$-structures, mutation.} 
\subjclass[2020]{18G80, 18E10}
\begin{document}
\begin{abstract}
Based on the recent works of M. Saor\'in and A. Zvonoreva on gluing (co)silting objects and of L. Angeler H\"ugel, R. Laking, J. \u{S}\u{t}ov\'i\u{c}ek and J. Vit\'oria on mutating (co)silting objects, we first study further on gluing pure-injective cosilting objects in algebraically compactly generated triangulated categories and gluing cosilting complexes in the derived categories of rings. Then we discuss the compatibility of cosilting gluing and cosilting mutation.
\end{abstract}
\maketitle
\section{Introduction}
Silting objects were introduced by Keller-Vossieck \cite{KV88} to classify $t$-structures and were later investigated by Aihara-Iyama \cite{AI} for studying cluster mutation. Additionally, they have been explored within triangulated categories with coproducts independently by Psaroudakis-Vit\'oria \cite{PV}, and Nicol\'as-Saor\'in-Zvonareva \cite{NSZ}, as well as Angeleri H\"ugel-Marks-Vit\'oria in multiple works \cite{A19, AMV16, AMV17, AMV20}. It has been demonstrated that silting objects are intricately related to $t$-structures and co-$t$-structures, which are foundational concepts in the representation theory of Artin algebras \cite{KY, MSSS, W}. There are two basic operations on these objects: silting gluing and silting mutation. The gluing techniques, inspired by the recollement theory of Beilinson, Bernstein, and Deligne \cite{BBD}, have been extensively studied in algebra representation theory and homological algebra. Silting gluing has been explored by Aihara and Iyama in Hom-finite Krull-Schmidt triangulated categories \cite{AI}, more explicitly by Liu, Vit\'oria, and Yang in Krull-Schmidt triangulated categories \cite{LVY}, and by Saor\'in and Zvonareva \cite{SZ} in triangulated categories with coproducts. These studies provide methods to construct silting objects in the central category of a recollement from those in the outer categories.
Silting mutation has been examined by Aihara and Iyama \cite{AI} and Buan, Reiten, and Thomas \cite{BRT} in Hom-finite Krull-Schmidt triangulated categories. Recently, Angeleri H\"ugel, Laking, \u{S}\u{t}ov\'i\u{c}ek, and Vit\'oria have developed a framework to investigate silting or cosilting mutation in triangulated categories with coproducts \cite{ALSV22}.

As the duals of silting objects, cosilting objects have also garnered considerable attention and have been extensively studied by various authors, including \cite{ALSV22, PV, ZW17}. Cosilting gluing was explored in \cite{SZ}, while cosilting mutation was investigated in \cite{ALSV22}.

Regarding these two operations, a natural question arises, which serves as the primary direct motivation for this paper.

\vspace{1em}
{\bf Question:} Are (co)silting mutation and (co)silting gluing in triangulated categories compatible?
\vspace{1em}

The first significant result related to this question is attributed to Aihara and Iyama \cite[Lemma 2.40]{AI} (or see \cite[Corollary 6.9]{LVY} for a more explicit presentation). In \cite[Corollary 6.9]{LVY}, it was demonstrated that silting mutation and silting gluing in a Krull-Schmidt triangulated category are compatible. Beyond this, we have not found any further discussion on this question. In this paper, we provide a partial answer in the context of cosilting theory, specifically within triangulated categories with products. We show that cosilting mutation and cosilting gluing in such categories are compatible under certain conditions.

To achieve our goal, we initially build upon the concepts of gluing cosilting objects as developed by Saorín and Zvonareva \cite{SZ}. We then advance the discussion by examining the gluing of pure-injective cosilting objects within a ladder of algebraically compactly generated triangulated categories of height 2 (see Theorem~\ref{thm:glue pure-inj cosilting}), as well as the gluing of cosilting complexes within a ladder of derived categories of rings of height 2 (see Theorem~\ref{thm:glue cosilting complex}). With these foundations established, we explore the compatibility of pure-injective cosilting mutation and pure-injective cosilting gluing within a recollement of triangulated categories (see Theorem~\ref{thm:muta at left} and Theorem~\ref{thm:muta at right}) under certain conditions. It is important to note that our methods can be transferred dually to the context of silting mutation and silting gluing in triangulated categories with coproducts.

The contents of this paper are organized as follows. In Section~\ref{sec:preliminaries}, we establish some notation and recall the definitions and facts that are pertinent to our discussion. In Section~\ref{sec:glue objects}, we revisit Saorín-Zvonareva's method of gluing cosilting objects and present a sufficient condition that enables this gluing process. This condition is also applied to the gluing of pure-injective cosilting objects in algebraically compactly generated triangulated categories. To support our objectives, we utilize several characterizations of pure-injective cosilting objects. In Section 4, we focus on gluing cosilting complexes within derived categories, which relies heavily on a specific characterization of cosilting complexes. Finally, in Section~\ref{sec:mutation}, we provide partial answers to the aforementioned question.

\section{Preliminaries}\label{sec:preliminaries}
Let $\A$ be an additive category, with the Hom-spaces in $\A$ denoted simply by $\A(-, -)$ simply. Let $\mathcal{S}$ be a class of objects of $\A$, by $\mathcal{S}{^{\perp}}$(resp., $^{\perp}\mathcal{S}$) we denote the full subcategory consisting of objects $A\in \A$ such that $\A(S,A)=0$(resp., $\A(A,S)=0)$, for all $S\in\mathcal{S}$. Furthermore, if $\A$ is a triangulated category with a shift functor $[1]$, for an integer $n\in \mathbb{N}$, we define
$$\mathcal{S}^{\perp_{\geq n}}:=\{X\in \A~|~{\A}(S,X[k])=0,~\forall~S\in\mathcal{S},~k\geq n\}$$ 
$${}^{\perp_{\geq n}}\mathcal{S}:=\{X\in \A~|~{\A}(X,S[k])=0,~\forall~S\in\mathcal{S},~k\geq n\}$$ 
Similarly, we can define $\mathcal{S}^{\perp_{\leq n}}$ and $^{\perp_{\leq n}}\mathcal{S}$.

\subsection{Torsion pairs, $t$-structures and co-$t$-structures}
\begin{Def}
Let $\T$ be a triangulated category, a pair of subcategories $(\mathcal{U},\mathcal{V})$ in $\T$ is a {\em torsion pair} (\cite{IY}) if
\begin{enumerate}
    \item $\mathcal{U}$ and $\mathcal{V}$ are closed under direct summands;
    \item ${\T}(\mathcal{U},\mathcal{V})=0$;
    \item for each $T\in\T$, there is a triangle $U\to T\to V\to U[1]$ in $\T$ with  $U\in\mathcal{U}$ and $V\in \mathcal{V}$.
\end{enumerate}

{\color{black}A pair $(\T^{\leq 0},\T^{\geq 0})$ of $\T$ is called a {\em $t$-structure} (\cite{BBD}) if $(\T^{\leq 0},\T^{\geq 0}[-1])$ is a torsion pair} and $\T^{\leq 0}[1]\subseteq \T^{\leq 0}$. In this case, the left (resp., right) part of the $t$-structure is called an {\em aisle} (resp., {\em coaisle}). The $t$-structure is {\em nondegenerate} if 
$$\underset{n\in\mathbb{Z}}{\bigcap}\T^{\leq 0}[n]=0=\underset{n\in\mathbb{Z}}{\bigcap} \T^{\geq 0}[n].$$
The {\em heart} of the $t$-structure is defined as $\mathcal{H}=\T^{\leq 0}\cap \T^{\geq 0}$.
{\color{black}A pair $(\T_{\geq 0},\T_{\leq 0})$ of $\T$ is called a {\em co-$t$-structure} (\cite{Bondarko}) if $(\T_{\geq 0}[-1],\T_{\leq 0})$ is a torsion pair and 
$\T_{\geq 0}[-1]\subseteq \T_{\geq 0}$.}
\end{Def}

Let $(\mathcal{U}, \mathcal{V})$ and $(\mathcal{V}, \mathcal{W})$ be torsion pairs, we say  $(\mathcal{U}, \mathcal{V})$ is {\em left adjacent} to $(\mathcal{V}, \mathcal{W})$ and $(\mathcal{V}, \mathcal{W})$ is {\em right adjacent} to $(\mathcal{U}, \mathcal{V})$. In this case, the triple $(\mathcal{U}, \mathcal{V}, \mathcal{W})$ is called a {\em torsion-torsion-free(TTF) triple}.

Let $A$ be a ring and $(\D^{\leq 0}, \D^{\geq 0})$ be the standard $t$-structure in $\D(A)$, define $\D^{\leq n}:=\D^{\leq 0}[-n]$ and $\D^{\geq n}:=\D^{\geq 0}[-n]$. This $t$-structure has both a left and a right adjacent co-$t$-structures. Indeed, let $K_{\leq 0}$ stand for the subcategory of objects in $\D(A)$ which are isomorphic to a complex $\cpx{Z}$ of injective $A$-modules such that $Z^{i}=0$ for all $i>0$, $(\D^{\geq 0}, K_{\leq 0})$ is the right adjacent co-$t$-structure of $(\D^{\leq 0}, \D^{\geq 0})$ (see \cite[Example 2.4]{AMV17}).

\subsection{Silting and cosilting objects}
\begin{Def}(\cite[Definition 4.1]{PV}, \cite[Definition 6.1]{A19})
An object $M$ in a triangulated category $\T$ is called:
\begin{enumerate}
    \item {\em silting} if $(M^{\perp_{>0}},M^{\perp_{<0}})$ is a $t$-structure in $\T$;
    \item {\em cosilting} if $({^{\perp_{<0}}M}, {^{\perp_{>0}}M})$ is a $t$-structure in $\T$.
\end{enumerate}
\end{Def}
Let $M$ be a cosilting object in $\T$, then $M\in {^{\perp_{>0}}M}$ (i.e. self-orthogonal in positive degrees) and generates $\T$(i.e. $X=0$ provided $\T(X, M[i])=0$ for $i\in \mathbb{Z}$). 

Two cosilting objects $M$ and $N$ are called {\em equivalent} if $\mathsf{Prod}(M)=\mathsf{Prod}(N)$, where the notation $\mathsf{Prod}(X)$ denotes the subcategory whose objects are summands of existing
products of copies of $X$. According to \cite[Proposition 4.3, Lemma 4.5]{PV}, the definition is equivalent to that in \cite[Definition 2.5]{AMV17}, namely, the associated $t$-structures of $M$ and $N$ are the same. 
It is well-known that $M$ is silting in $\T$ if and only if $M$ is cosilting in $\T^{\mathsf{op}}$.

A triangle $X\stackrel{f}{\to} Y\stackrel{g}{\to} Z\stackrel{h}{\to} X[1]$ in $\T$ is a {\em pure triangle} (\cite{K00}) if $\forall ~C\in\T^{c}$ the sequence 
$$0\longrightarrow \T(C, X)\stackrel{f^*}\longrightarrow \T(C,Y)\stackrel{g^*}\longrightarrow \T(C,Z)\longrightarrow 0$$ is exact. An object $E\in\T$ is {\em pure-injective} (\cite{K00}) if for any pure triangle $X\stackrel{f}{\to} Y\stackrel{g}{\to} Z\stackrel{h}{\to} X[1]$, the induced sequence 
$$0\longrightarrow \T(Z,E)\stackrel{g_*}\longrightarrow \T(Y,E)\stackrel{f_*}\longrightarrow \T(X,E)\longrightarrow 0$$
is exact.

\subsection{Recollements and ladders}
\begin{Def}
Let $\mathscr{T}$, $\X$ and $\Y$ be triangulated categories. The category $\mathscr{T}$ is a {\em recollement} (\cite{BBD}) of $\X$
and $\Y$ if there is a diagram of six triangulated functors 
$$\xymatrix{\X\ar^-{i_*=i_!}[r]&\mathscr{T}\ar^-{j^!=j^*}[r]
\ar^-{i^!}@/^1.2pc/[l]\ar_-{i^*}@/_1.6pc/[l]
&\Y\ar^-{j_*}@/^1.2pc/[l]\ar_-{j_!}@/_1.6pc/[l]}$$ such
that
\begin{enumerate}
    \item $(i^*,i_*),(i_!,i^!),(j_!,j^!)$ and $(j^*,j_*)$ are adjoint
pairs;
    \item $i_*,j_*$ and $j_!$ are fully faithful functors;
    \item $i^!j_*=0$; and
    \item for each object $T\in\mathscr{T}$, there are two triangles in
$\T$
$$
i_!i^!(T)\longrightarrow T\longrightarrow j_*j^*(T)\longrightarrow i_!i^!(T)[1],
$$
$$
j_!j^!(T)\longrightarrow T\longrightarrow i_*i^*(T)\longrightarrow j_!j^!(T)[1].
$$
\end{enumerate}
\end{Def}

 Let $(\X^{\leq 0},\X^{\geq 0})$ and $(\Y^{\leq 0}, \Y^{\geq 0})$ be $t$-structures in $\X$ and $\Y$, respectively. The {\em glued $t$-structure} (\cite{BBD}) $(\T^{\leq 0},\T^{\geq 0})$ of $\T$ along the recollement is defined as 
$$\T^{\leq 0}:=\{T\in\T| i^{*}(T)\in \X^{\leq 0}, j^{*}(T)\in \Y^{\leq 0}\},$$
$$\T^{\geq 0}:=\{T\in\T| i^{!}(T)\in \X^{\geq 0}, j^{*}(T)\in \Y^{\geq 0}\}.$$
Obviously, $i^{*}(\T^{\leq 0})=\X^{\leq 0}$, $j^{*}(\T^{\leq 0})=\Y^{\leq 0}$, $i^{!}(\T^{\geq 0})=\X^{\geq 0}$ and $j^{*}(\T^{\geq 0})=\Y^{\geq 0}$.
\begin{Def}
A {\em ladder} (\cite{AKLY}) is a finite or infinite diagram of triangulated categories and triangulated functors
$$\xymatrix{
&
\vdots
&&
\vdots
&\\
\X
\ar@<1.5ex>[rr]|-{i_{n-1}}
\ar@<-1.5ex>[rr]|-{i_{n+1}}
&&
\ar@<3ex>[ll]|-{j_{n-2}}
\ar[ll]|-{j_{n}}
\ar@<-3ex>[ll]|-{j_{n+2}}
\T
\ar@<1.5ex>[rr]|-{j_{n-1}}
\ar@<-1.5ex>[rr]|-{j_{n+1}}
&&
\ar@<3ex>[ll]|-{i_{n-2}}
\ar[ll]|-{i_{n}}
\ar@<-3ex>[ll]|-{i_{n+2}}
\Y\\
&
\vdots
&&
\vdots
&}
$$
such that any three consecutive rows form a recollement. The {\em height} of a ladder is the number of recollements contained in it (counted with multiplicities).
\end{Def}

\section{Gluing cosilting objects}\label{sec:glue objects}
In this section, we revisit a method of gluing cosilting objects developed by Saor\'in and Zvonareva. We apply this technique to glue pure-injective cosilting objects within algebraically compactly generated triangulated categories. 
\begin{Def}\label{def:gluable cosilting}
    Let
$$\xymatrix{\mathscr{T}_1\ar^-{i_*=i_!}[r]&\mathscr{T}\ar^-{j^!=j^*}[r]
\ar^-{i^!}@/^1.2pc/[l]\ar_-{i^*}@/_1.6pc/[l]
&\mathscr{T}_2\ar^-{j_*}@/^1.2pc/[l]\ar_-{j_!}@/_1.6pc/[l]}$$
be a recollement of triangulated categories, $C_1$ and $C_2$ be cosilting objects in $\mathscr{T}_1$ and $\mathscr{T}_2$, respectively. The pair $(C_1,C_2)$ is called a {\em gluable cosilting pair} if $i_{*}(C_1)$ admits a triangle
$$(*)~~~~~V\longrightarrow i_{*}(C_1)\longrightarrow U\longrightarrow V[1]~\text{with}~V\in j_{*}(^{\perp_{\geq 0}}C_2), U\in (j_{*}(^{\perp_{\geq 0}}C_2))^{\perp}.$$
\end{Def}

\begin{Theo}\label{thm:glue cosilting}\textnormal{(\cite[Theorem 6.3 and Remark 6.7]{SZ})}
If $(C_1, C_2)$ is a gluable cosilting pair, then $C:= j_{*}(C_2)\oplus U$ is a cosilting object in $\T$ such that its associative $t$-structure $(^{\perp_{<0}}C, {^{\perp_{>0}}C})$ is the glued $t$-structure from $(^{\perp_{< 0}}C_2, {^{\perp_{> 0}}C_2})$ and  $(^{\perp_{< 0}}C_1, {^{\perp_{> 0}}C_1})$. That means $C$ is independent of the choice of $U$ up to equivalence.
\end{Theo}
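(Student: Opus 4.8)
The plan is to prove the final statement by showing directly that the pair $({}^{\perp_{<0}}C,\,{}^{\perp_{>0}}C)$ coincides with the glued $t$-structure $(\T^{\leq 0},\T^{\geq 0})$ assembled from $({}^{\perp_{<0}}C_2,{}^{\perp_{>0}}C_2)$ and $({}^{\perp_{<0}}C_1,{}^{\perp_{>0}}C_1)$, where $\T^{\leq 0}=\{X: i^*(X)\in{}^{\perp_{<0}}C_1,\ j^*(X)\in{}^{\perp_{<0}}C_2\}$ and $\T^{\geq 0}=\{X: i^!(X)\in{}^{\perp_{>0}}C_1,\ j^*(X)\in{}^{\perp_{>0}}C_2\}$. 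Since the glued pair is already a $t$-structure by \cite{BBD}, the two equalities ${}^{\perp_{<0}}C=\T^{\leq 0}$ and ${}^{\perp_{>0}}C=\T^{\geq 0}$ will immediately force $C$ to be cosilting by definition and will identify its associated $t$-structure as claimed. The last assertion, that $C$ is independent of $U$ up to equivalence, then follows because the glued $t$-structure does not involve $U$ at all, combined with the fact that two cosilting objects with the same associated $t$-structure have equal $\mathsf{Prod}$ (\cite[Proposition 4.3, Lemma 4.5]{PV}).

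First I would dispatch the $j$-direction, which is carried by the summand $j_*(C_2)$. The adjunction $(j^*,j_*)$ together with $j^!=j^*$ gives $\T(X,j_*(C_2)[k])\cong\T_2(j^*(X),C_2[k])$, so $X\in{}^{\perp_{>0}}(j_*C_2)$ iff $j^*(X)\in{}^{\perp_{>0}}C_2$, and $X\in{}^{\perp_{<0}}(j_*C_2)$ iff $j^*(X)\in{}^{\perp_{<0}}C_2$; these are exactly the $j^*$-conditions appearing in $\T^{\geq 0}$ and $\T^{\leq 0}$. The crucial structural input about $U$ is the identity $i^!U\cong C_1$: applying $i^!$ to the defining triangle $(*)$, the term $V\in j_*({}^{\perp_{\geq 0}}C_2)$ is annihilated by $i^!j_*=0$ while $i^!i_*\cong\mathrm{id}$ on $i_*(C_1)$, so the triangle collapses to $i^!U\cong C_1$. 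Likewise, applying $j^*$ to $(*)$ and using $j^*i_*=0$ yields $j^*(U)\cong(j^*V)[1]$, which lies in $\T_2^{\geq 0}$.

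The main work, and the step I expect to be the genuine obstacle, is to show that on the relevant subcategories the residual condition coming from $U$ reproduces precisely the $i$-direction of the glued $t$-structure. Concretely I would prove that for $X$ with $j^*(X)\in{}^{\perp_{>0}}C_2$ one has $X\in{}^{\perp_{>0}}U$ iff $i^!(X)\in{}^{\perp_{>0}}C_1$, and dually that for $X$ with $j^*(X)\in{}^{\perp_{<0}}C_2$ one has $X\in{}^{\perp_{<0}}U$ iff $i^*(X)\in{}^{\perp_{<0}}C_1$. For the coaisle I would feed the recollement triangle $i_*i^!(X)\to X\to j_*j^*(X)\to i_*i^!(X)[1]$ into $\T(-,U[k])$; here the hypothesis $U\in(j_*({}^{\perp_{\geq 0}}C_2))^{\perp}$, together with the identification ${}^{\perp_{\geq 0}}C_2=\T_2^{\geq 1}$ and $j^*(X)\in{}^{\perp_{>0}}C_2=\T_2^{\geq 0}$, forces $\T(j_*j^*(X),U[k])=0$ for all $k\geq 1$, so that $\T(X,U[k])\cong\T(i_*i^!(X),U[k])\cong\T_1(i^!(X),C_1[k])$ via the $(i_!,i^!)$ adjunction and $i^!U\cong C_1$; vanishing for every $k>0$ is exactly $i^!(X)\in{}^{\perp_{>0}}C_1$. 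For the aisle I would instead use the other recollement triangle $j_!j^*(X)\to X\to i_*i^*(X)\to j_!j^*(X)[1]$: now the $(j_!,j^*)$ adjunction and $j^*(U)\in\T_2^{\geq 0}$ make the $j$-term vanish for $k<0$ whenever $j^*(X)\in{}^{\perp_{<0}}C_2$, reducing $\T(X,U[k])$ to $\T_1(i^*(X),C_1[k])$ and yielding $X\in{}^{\perp_{<0}}U$ iff $i^*(X)\in{}^{\perp_{<0}}C_1$.

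Combining these with the $j$-computations gives ${}^{\perp_{>0}}C={}^{\perp_{>0}}(j_*C_2)\cap{}^{\perp_{>0}}U=\T^{\geq 0}$ and ${}^{\perp_{<0}}C={}^{\perp_{<0}}(j_*C_2)\cap{}^{\perp_{<0}}U=\T^{\leq 0}$, which finishes the argument. The delicate point throughout is bookkeeping of which degrees the orthogonality hypothesis $U\in(j_*({}^{\perp_{\geq 0}}C_2))^{\perp}$ actually controls: the hidden shift in ${}^{\perp_{\geq 0}}C_2=\T_2^{\geq 1}$ is exactly what makes the $j$-parts of the two long exact sequences vanish in the complementary ranges $k\geq 1$ (for the coaisle) and $k<0$ (for the aisle), and ensuring these ranges match the aisle/coaisle cutoffs so that the surviving terms collapse to $\T_1(i^!(X),C_1[k])$ and $\T_1(i^*(X),C_1[k])$ is the crux of the proof.
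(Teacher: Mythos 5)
Your proposal is correct, but the comparison here is unusual: the paper does not prove this statement at all --- it is imported wholesale from \cite[Theorem 6.3 and Remark 6.7]{SZ}, so what you have produced is a self-contained verification of the cited result, specialized to single cosilting objects rather than the partial (co)silting sets treated in \cite{SZ}. Your route --- establishing the two equalities ${}^{\perp_{<0}}C=\T^{\leq 0}$ and ${}^{\perp_{>0}}C=\T^{\geq 0}$ and then letting the \cite{BBD} gluing theorem supply the $t$-structure axioms, so that $C$ is cosilting by definition --- is sound, and the computations check out: the identities $i^{!}(U)\simeq C_1$ and $j^{*}(U)\simeq j^{*}(V)[1]$ are exactly parts (1) and (3) of Proposition~\ref{rem:from condition tri} (there left to the reader); the identification ${}^{\perp_{\geq 0}}C_2={}^{\perp_{>0}}C_2[-1]$ is what converts the hypothesis $U\in (j_{*}({}^{\perp_{\geq 0}}C_2))^{\perp}$ into the vanishing $\T(j_*j^*(X),U[k])=0$ for all $k\geq 1$ whenever $j^*(X)\in{}^{\perp_{>0}}C_2$ (your long exact sequence needs vanishing at both $k$ and $k+1$, and has it), while the torsion-pair axiom $\T_2({}^{\perp_{<0}}C_2,{}^{\perp_{>0}}C_2[-1])=0$ gives $\T(j_!j^*(X),U[k])=0$ for all $k<0$ whenever $j^*(X)\in{}^{\perp_{<0}}C_2$ (needed at $k$ and $k-1$, again fine); and both reductions to $\T_1(i^{!}(X),C_1[k])$ resp.\ $\T_1(i^{*}(X),C_1[k])$ use the adjunction of $i_*=i_!$ with $i^{!}$ correctly. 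The closing step --- independence of $U$ up to equivalence --- correctly invokes the fact that equivalence of cosilting objects is detected by the associated $t$-structure, which the paper records via \cite[Proposition 4.3, Lemma 4.5]{PV}. What your argument buys is an elementary, self-contained proof of precisely the statement the paper needs; what the citation to \cite{SZ} buys is greater generality (partial silting sets and lifting of recollements) at the cost of substantially more machinery.
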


The following proposition is devoted to understanding the triangle $(*)$ in the definition above.

\begin{Prop}\label{rem:from condition tri}
   \begin{enumerate}
    \item $C_1\simeq i^!(U)$;
    \item $\T(V, U[k])=0, \forall~k\geq 0$. Furthermore, $V[1]\in {^{\perp_{>0}}C}$;
    \item The triangle $(*)$ in Definition~\ref{def:gluable cosilting} is isomorphic to the canonical triangle
    $$j_*j^*(U)[-1]\longrightarrow i_*i^!(U)\longrightarrow U\longrightarrow j_*j^*(U).$$
Moreover, the existence of the triangle $(*)$ is equivalent to the existence of an object $U\in (j_{*}(^{\perp_{\geq 0}}C_2))^{\perp}$ such that $i^!(U)\simeq C_1$ and $j^*(U)\in ^{\perp_{>0}}C_2$.
\end{enumerate} 
\end{Prop}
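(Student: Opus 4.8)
The plan is to establish (1)--(3) in turn, using as a toolkit the standard recollement identities $i^!i_*\simeq\mathrm{id}$, $j^*j_*\simeq\mathrm{id}$, $i^!j_*=0$, $j^*i_*=0$ (the first two coming from full faithfulness of $i_*=i_!$ and of $j_*$ via the relevant unit/counit), together with one elementary shift relation. Namely, unwinding the definitions one checks directly that $({}^{\perp_{\geq 0}}C_2)[1]={}^{\perp_{>0}}C_2$; in particular ${}^{\perp_{\geq 0}}C_2=({}^{\perp_{>0}}C_2)[-1]$ is closed under the shift $[-1]$. For (1), I would apply the triangulated functor $i^!$ to $(*)$: writing $V=j_*(W)$ with $W\in{}^{\perp_{\geq 0}}C_2$ gives $i^!(V)=i^!j_*(W)=0$ while $i^!i_*(C_1)\simeq C_1$, so the resulting triangle forces $C_1\simeq i^!(U)$.

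For (2), keeping $V=j_*(W)$, for each $k\geq 0$ the object $V[-k]=j_*(W[-k])$ lies in $j_*({}^{\perp_{\geq 0}}C_2)$ by the closure under $[-1]$, whence $\T(V,U[k])=\T(V[-k],U)=0$ because $U\in(j_*({}^{\perp_{\geq 0}}C_2))^{\perp}$. To see $V[1]\in{}^{\perp_{>0}}C$ for the glued object $C=j_*(C_2)\oplus U$, I would test the two summands separately: against $U$ one has $\T(V[1],U[k])=\T(V,U[k-1])=0$ for $k>0$ by the first half; against $j_*(C_2)$, full faithfulness of $j_*$ gives $\T(V[1],j_*(C_2)[k])\cong\mathscr{T}_2(W[1],C_2[k])=\mathscr{T}_2(W,C_2[k-1])=0$ for $k>0$ since $W\in{}^{\perp_{\geq 0}}C_2$.

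The heart of the matter is (3). The key observation is that $(\ker j^*,\ker i^!)=(\mathrm{Im}\,i_*,\mathrm{Im}\,j_*)$ is a torsion pair in $\T$, whose decomposition triangle of an object $T$ is exactly the canonical triangle $i_*i^!(T)\to T\to j_*j^*(T)\to i_*i^!(T)[1]$: indeed $\T(i_*(-),j_*(-))=0$ by $j^*i_*=0$ and adjunction, both classes are closed under summands (for $\ker i^!=\mathrm{Im}\,j_*$ one uses the canonical triangle), and that triangle realizes the decomposition. Rotating $(*)$ to $i_*(C_1)\to U\to V[1]\to i_*(C_1)[1]$, its outer term lies in $\mathrm{Im}\,i_*$ and, as $V[1]=j_*(W[1])$, its third term lies in $\mathrm{Im}\,j_*$; by uniqueness of torsion decompositions this triangle is isomorphic to the canonical triangle for $U$, which is the asserted isomorphism. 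Reading off the third terms yields $j_*j^*(U)\simeq j_*(W[1])$, hence $j^*(U)\simeq W[1]\in({}^{\perp_{\geq 0}}C_2)[1]={}^{\perp_{>0}}C_2$; combined with (1) this gives the forward implication of the final equivalence. For the converse, given $U\in(j_*({}^{\perp_{\geq 0}}C_2))^{\perp}$ with $i^!(U)\simeq C_1$ and $j^*(U)\in{}^{\perp_{>0}}C_2$, I would rotate the canonical triangle of $U$ to $j_*j^*(U)[-1]\to i_*i^!(U)\to U\to j_*j^*(U)$, set $V:=j_*(j^*(U)[-1])$, and note $j^*(U)[-1]\in({}^{\perp_{>0}}C_2)[-1]={}^{\perp_{\geq 0}}C_2$ and $i_*i^!(U)\simeq i_*(C_1)$; this is precisely a triangle of the form $(*)$.

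I expect the one genuine obstacle to be the torsion-pair step in (3): recognizing $(\mathrm{Im}\,i_*,\mathrm{Im}\,j_*)$ as a torsion pair and invoking uniqueness of its decomposition triangles to match $(*)$ with the canonical recollement triangle. Once that identification is in place, everything else is bookkeeping with the adjunction identities and the single shift relation $({}^{\perp_{\geq 0}}C_2)[1]={}^{\perp_{>0}}C_2$.
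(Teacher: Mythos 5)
Your proof is correct and follows essentially the same route as the paper: for (3) the paper likewise observes that $(*)$ (rotated) and the canonical recollement triangle are both decomposition triangles of $U$ with respect to the stable $t$-structure $(i_*(\mathscr{T}_1), j_*(\mathscr{T}_2))$, hence isomorphic by uniqueness, and obtains the converse by rotating the canonical triangle of $U$ exactly as you do. Parts (1) and (2), which the paper leaves to the reader, are filled in correctly by your adjunction and shift arguments.
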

\begin{proof}
(1) and (2) are left to the reader. We will prove (3). Since both the triangle $(*)$ and the canonical triangle are  decomposition triangles of $U$ with respect to the $t$-structure $(i_*(\T_1), j_*(\T_2))$ in $\T$, so they are isomorphic. Given such an object $U$ as described in $(3)$, the canonical triangle of $U$ is precisely a triangle for $i_{*}(C_1)$ that satisfies the condition $(*)$.
\end{proof}

The proposition below is a dual of \cite[Corollary 6.5]{SZ} and presents a sufficient condition to satisfy the condition $(*)$. For the convenience of the reader, a detailed proof is included.

\begin{Prop}\label{prop:sufficient glue}\textnormal{(Dual of \cite[Corollary 6.5]{SZ})}
Let the following diagram be a ladder of triangulated categories of height 2
$$(\#)\quad\xymatrix{
\mathscr{T}_1
\ar@<-2.25ex>[rr]|-{i_{\#}}
\ar@<0.75ex>[rr]|-{i_{*}}
&&
\ar@<-2.25ex>[ll]|-{i^{*}}
\ar@<0.75ex>[ll]|-{i^{!}}
\T
\ar@<-2.25ex>[rr]|-{j^{\#}}
\ar@<0.75ex>[rr]|-{j^{*}}
&&
\ar@<-2.25ex>[ll]|-{j_{!}}
\ar@<0.75ex>[ll]|-{j_{*}}
\mathscr{T}_2}.$$
Assume $C_1$ and $C_2$ are cosilting objects in $\mathscr{T}_1$ and $\mathscr{T}_2$ respectively and the $t$-structure $({^{\perp_{< 0}}C_2}, {^{\perp_{> 0}}C_2})$ has an adjacent co-$t$-structure $({^{\perp_{> 0}}C_2}, (^{\perp_{\geq 0}}C_2)^{\perp})$. Then $(C_1,~C_2)$ is a gluable cosilting pair concerning the upper recollement.
\end{Prop}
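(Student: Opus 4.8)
The plan is to verify the defining triangle $(*)$ by producing the object $U$ characterized in Proposition~\ref{rem:from condition tri}(3): it suffices to build $U\in (j_*({}^{\perp_{\geq 0}}C_2))^{\perp}$ with $i^!(U)\simeq C_1$ and $j^*(U)\in {}^{\perp_{>0}}C_2$. Two observations organize the argument. First, the height-$2$ ladder $(\#)$ supplies, on top of the upper recollement, the extra left adjoints $i_{\#}\dashv i^{*}$ and $j^{\#}\dashv j_{!}$; equivalently it records the lower recollement $\mathscr{T}_2\xrightarrow{j_!}\T\xrightarrow{i^*}\mathscr{T}_1$. It is exactly this second layer that makes the statement nontrivial: the lower recollement forces $i^{*}j_{!}=0$ but not $i^{*}j_{*}=0$, so (unlike under a bare recollement) $i_{*}(C_1)$ itself need not lie in $(j_*({}^{\perp_{\geq 0}}C_2))^{\perp}$, and a genuine correction is required. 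Second, unravelling the hypothesis, the co-$t$-structure $({}^{\perp_{>0}}C_2,({}^{\perp_{\geq 0}}C_2)^{\perp})$ being right adjacent to the cosilting $t$-structure is equivalent to $({}^{\perp_{\geq 0}}C_2,({}^{\perp_{\geq 0}}C_2)^{\perp})$ being a torsion pair in $\mathscr{T}_2$, so every $Y\in\mathscr{T}_2$ sits in a functorial truncation triangle $t(Y)\to Y\to f(Y)\to t(Y)[1]$ with $t(Y)\in {}^{\perp_{\geq 0}}C_2$ and $f(Y)\in({}^{\perp_{\geq 0}}C_2)^{\perp}$.

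Next I would glue this torsion pair along the recollement. Because the ladder has height $2$, all adjoints needed to transport a co-$t$-structure across the recollement are available, in contrast with the Beilinson--Bernstein--Deligne gluing of $t$-structures, which only requires a single recollement. Transporting $({}^{\perp_{\geq 0}}C_2,({}^{\perp_{\geq 0}}C_2)^{\perp})$ through $j_*$ and correcting along $\mathscr{T}_1$ by means of the extra adjoint, one produces for the distinguished object $i_{*}(C_1)$ a torsion-type truncation, that is a triangle $V\to i_{*}(C_1)\to U\to V[1]$ with $V\in j_*({}^{\perp_{\geq 0}}C_2)$ and $U\in (j_*({}^{\perp_{\geq 0}}C_2))^{\perp}$. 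Concretely $V=j_*(G)$ for some $G\in {}^{\perp_{\geq 0}}C_2$, and $U$ is the cone, so that the rotated triangle reads $i_{*}(C_1)\to U\to j_*(G[1])\to i_{*}(C_1)[1]$, exhibiting $U$ as an extension of $j_*(j^*U)$ by $i_*i^!U$ exactly as in the canonical triangle of Proposition~\ref{rem:from condition tri}(3).

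The two remaining identifications are then routine. Applying $i^{!}$ to the triangle and using the recollement identity $i^{!}j_{*}=0$ gives $i^{!}(V)=i^!j_*(G)=0$, whence $i^{!}(U)\simeq i^{!}i_{*}(C_1)\simeq C_1$. Applying $j^{*}$ and using $j^{*}i_{*}=0$ together with $j^*j_*\simeq\mathrm{id}$ yields a triangle $G\to 0\to j^{*}(U)\to G[1]$, so $j^{*}(U)\simeq G[1]\in {}^{\perp_{\geq 0}}C_2[1]={}^{\perp_{>0}}C_2$. By Proposition~\ref{rem:from condition tri}(3) the object $U$ then satisfies condition $(*)$ of Definition~\ref{def:gluable cosilting}, so $(C_1,C_2)$ is a gluable cosilting pair with respect to the upper recollement.

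The crux is the gluing step of the second paragraph, namely arranging that the cone $U$ is genuinely right orthogonal to $j_*({}^{\perp_{\geq 0}}C_2)$. Applying $\T(j_*(X),-)$ for $X\in {}^{\perp_{\geq 0}}C_2$ to the triangle turns the requirement $U\in (j_*({}^{\perp_{\geq 0}}C_2))^{\perp}$ into a compatibility between the torsion datum $G$, equivalently $j^{*}(U)=G[1]$, which the construction will place in the co-heart $({}^{\perp_{>0}}C_2)\cap ({}^{\perp_{\geq 0}}C_2)^{\perp}$, and the connecting morphism $\delta\in\T(j_*(G[1]),i_{*}(C_1)[1])\cong \mathscr{T}_1(i^{*}j_*(G[1]),C_1[1])$: in the resulting long exact sequence the contributions $\T_2(X,G[1])$ and $\mathscr{T}_1(i^{*}j_*(X),C_1)$ must cancel. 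Producing $G$ and $\delta$ with this property is exactly what the adjacent co-$t$-structure, read through the extra adjoint $i_{\#}$, is meant to deliver, and it is the step I expect to demand the most care; it is also where the height-$2$ hypothesis is indispensable, since a single recollement neither controls $i^{*}j_{*}$ nor permits a co-$t$-structure to be glued.
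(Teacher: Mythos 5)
Your reduction is sound as far as it goes: by Proposition~\ref{rem:from condition tri}(3) it suffices to produce $U\in (j_*({}^{\perp_{\geq 0}}C_2))^{\perp}$ sitting in a triangle $j_*(G)\to i_*(C_1)\to U\to j_*(G)[1]$ with $G\in{}^{\perp_{\geq 0}}C_2$, and your verifications that such a $U$ has $i^!(U)\simeq C_1$ and $j^*(U)\simeq G[1]\in{}^{\perp_{>0}}C_2$ are correct. But you never construct $G$, the map $j_*(G)\to i_*(C_1)$, or the membership $U\in (j_*({}^{\perp_{\geq 0}}C_2))^{\perp}$; you explicitly defer this as the step expected ``to demand the most care'', and your final paragraph, which frames it as a cancellation between $\T_2(X,G[1])$ and $\mathscr{T}_1(i^{*}j_*(X),C_1)$ in a long exact sequence, shows the actual mechanism is missing. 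That step is the entire content of the proposition. The paper fills it as follows: apply $j^{\#}$ (the right adjoint of $j_*$ coming from the lower recollement) to $i_*(C_1)$, truncate $j^{\#}i_*(C_1)$ with respect to the torsion pair $({}^{\perp_{\geq 0}}C_2,({}^{\perp_{\geq 0}}C_2)^{\perp})$ to get $V\to j^{\#}i_*(C_1)\to U'\to V[1]$, and then apply the octahedral axiom to the composite $j_*(V)\to j_*j^{\#}i_*(C_1)\to i_*(C_1)$ (second map the counit), combined with the lower-recollement canonical triangle $j_*j^{\#}i_*(C_1)\to i_*(C_1)\to i_{\#}i^!i_*(C_1)$. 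The octahedron produces a triangle $j_*(U')\to U\to i_{\#}i^!i_*(C_1)\to j_*(U')[1]$, and $U\in(j_*({}^{\perp_{\geq 0}}C_2))^{\perp}$ follows by closure of a right-perpendicular class under extensions: $j_*(U')$ is right-orthogonal to $j_*({}^{\perp_{\geq 0}}C_2)$ because $j_*$ is fully faithful, and $i_{\#}i^!i_*(C_1)$ is right-orthogonal because $\T(j_*(X),i_{\#}(Z))\simeq\mathscr{T}_1(i^!j_*(X),Z)=0$ since $i^!j_*=0$. No cancellation of Hom-contributions is needed; the orthogonality is structural, and your guess that $j^*(U)$ must land in the co-heart ${}^{\perp_{>0}}C_2\cap({}^{\perp_{\geq 0}}C_2)^{\perp}$ is likewise not what the construction gives (only $j^*(U)\simeq V[1]\in{}^{\perp_{>0}}C_2$).

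There is a second, compounding error: you misread the ladder. You assert the extra adjunctions are $i_{\#}\dashv i^{*}$ and $j^{\#}\dashv j_{!}$, i.e.\ that the second recollement is $\mathscr{T}_2\xrightarrow{j_!}\T\xrightarrow{i^*}\mathscr{T}_1$; in the paper's ladder of height $2$ the new functors are \emph{right} adjoints, $i^{!}\dashv i_{\#}$ and $j_{*}\dashv j^{\#}$, so the lower recollement is $\mathscr{T}_2\xrightarrow{j_*}\T\xrightarrow{i^!}\mathscr{T}_1$. This is not cosmetic: with your adjunctions there is no right adjoint $j^{\#}$ of $j_*$, hence no object $j^{\#}i_*(C_1)$ of $\mathscr{T}_2$ to truncate, no counit $j_*j^{\#}i_*(C_1)\to i_*(C_1)$ to cone off, and no adjunction $(i^!,i_{\#})$ with which to prove that the correction term is orthogonal to $j_*({}^{\perp_{\geq 0}}C_2)$. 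So the gap cannot be closed within the setup as you stated it; the proof genuinely requires the downward reading of the ladder.
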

\begin{proof}
We have the following diagram {\color{black}with objects in $\T$} by the octahedral axiom of triangulated categories
 $$
(\#\#)\quad \xymatrix{
j_{*}(V)\ar[d]\ar@{=}[r]&
 j_{*}(V)\ar@{-->}[d]\\
 j_{*}j^{\#}i_{*}(C_1)\ar[d]\ar[r]&
 i_{*}(C_1)\ar@{-->}[d]\ar[r]&
 i_{\#}i^{!}i_{*}(C_1)\ar@{=}[d]\ar[r]&
 j_{*}j^{\#}i_{*}(C_1)[1]\ar[d]\\
 j_{*}(U')\ar[d]\ar[r]&
 U\ar@{-->}[d]\ar[r]&
 i_{\#}i^{!}i_{*}(C_1)\ar[r]&
 j_{*}(U')[1]\\
 j_{*}(V)[1]\ar@{=}[r]&
 j_{*}(V)[1]
 }$$
where the first column is the image of $j_*$ of a decomposition triangle of $j^\#i_*(C_1)$ with respect to the co-$t$-structure $(^{\perp_{> 0}}C_2, (^{\perp_{\geq 0}}C_2)^{\perp})$ and the second row is the canonical triangle of $i_*(C_1)$ from the definition of recollement. {\color{black}In the diagram, $j_*(V)\in j_*(^{\perp_{\geq 0}}C_2)$ and $j_*(U')\in j_*((^{\perp_{\geq 0}}C_2)^{\perp}) \subseteq (j_*(^{\perp_{\geq 0}}C_2))^{\perp}$ ($j_*$ is fully-faithful). Since $ i_{\#}i^{!}i_{*}(C_1)\in (j_*(^{\perp_{\geq 0}}C_2))^{\perp}$, then the third-row triangle tells us $U\in (j_*(^{\perp_{\geq 0}}C_2))^{\perp}$.} The triangle in the second column 
$$j_*(V)\longrightarrow i_*(C_1)\longrightarrow U\longrightarrow j_*(V)[1]$$
satisfies the condition $(*)$ in Definition~\ref{def:gluable cosilting}. We finish the proof.
\end{proof}

Let $\T$ be a triangulated category. A subcategory $\mathcal{V}$ of $\T$ is called {\em cosuspended} (\cite[Defnition 3.2]{MV18}) if $\mathcal{V}$ is closed under extensions and $\mathcal{V}[-1]\subseteq \mathcal{V}$. Assuming $\T$ has coproducts, we denote $\T^{c}$ the subcategory of all compact objects in $\T$. A subcategory $\mathcal{C}$ of $\T$ is called {\em definable} (\cite[Definition 4.1]{AMV17} or \cite[Definition 3.9]{MV18}) if there is a set $S$ of morphisms in $\T^{c}$ such that 
$$\mathcal{C}=\{X\in\T\mid {\T}(f,X)~\text{is surjective for all}~f\in S\}.$$

A TTF triple $(\X, \Y, \Z)$ of subcategories of $\T$ is called {\em cosuspended} if, in addition, $\Y$ is cosuspended, this is equivalent to say $(\X, \Y)$ is a $t$-structure and $(\Y, \Z)$ is a co-$t$-structure. In this case, $\Y$ is called a {\em cosuspended TTF class}. Furthermore, if $(\X, \Y)$ is nondegenerate, then we call $\Y$ a {\em nondegenerate cosuspended TTF class}. The lemma below is a result of gluing definable subcategories with this property.

\begin{Lem}\label{lem:glue ttf}\textnormal{(\cite[Theorem 5.9]{PV19})}
Let $(\T_1, \T, \T_2)$ be a ladder of compactly generated triangulated categories of height 2, as described in $(\#)$ (see Proposition~\ref{prop:sufficient glue}). Let $\D_1$ and $\D_2$ be definable subcategories of $\mathscr{T}_1$ and $\mathscr{T}_2$, respectively. We define the glued subcategory $\D$ of $\T$ along the upper recollement as follows:
$$\D:=\{T\in\T| i^{!}(T)\in \D_1, j^{*}(T)\in \D_2\}$$
\begin{enumerate}
    \item $\D_1$ and $\D_2$ are definable, then so is $\D$.
    \item If both $\D_1$ and $\D_2$ are nondegenerate cosuspended TTF classes, then so is the subcategory $\D$.   
\end{enumerate}
\end{Lem}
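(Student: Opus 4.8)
The plan is to establish the two assertions separately, the essential new ingredient in both being the extra adjoints $i_{\#}$ and $j^{\#}$ provided by the height-$2$ ladder, which I read off as the adjoint chains $i^{*}\dashv i_{*}\dashv i^{!}\dashv i_{\#}$ and $j_{!}\dashv j^{*}\dashv j_{*}\dashv j^{\#}$. For (1), I would first note that a finite intersection of definable subcategories is again definable, its defining set being the union of the given sets of compact morphisms; since $\D=\{T : i^{!}T\in\D_{1}\}\cap\{T : j^{*}T\in\D_{2}\}$, it suffices to show each factor is definable. Writing $\D_{k}=\{X : \T_{k}(f,X)\text{ is surjective for all }f\in S_{k}\}$ with $S_{k}\subseteq\T_{k}^{c}$, the adjunction $(j_{!},j^{*})$ gives $\T_{2}(f,j^{*}T)\cong\T(j_{!}f,T)$, and as $j^{*}$ preserves coproducts (it is a left adjoint of $j_{*}$) its left adjoint $j_{!}$ preserves compactness, so $j_{!}(S_{2})\subseteq\T^{c}$ and $\{T : j^{*}T\in\D_{2}\}$ is definable. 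For the other factor the adjunction $(i_{*},i^{!})$ gives $\T_{1}(f,i^{!}T)\cong\T(i_{*}f,T)$; here the ladder enters, for the extra adjunction $(i^{!},i_{\#})$ makes $i^{!}$ a left adjoint, hence $i^{!}$ preserves coproducts and equivalently $i_{*}$ preserves compactness, so $i_{*}(S_{1})\subseteq\T^{c}$. Thus $\D$ is cut out by $i_{*}(S_{1})\cup j_{!}(S_{2})\subseteq\T^{c}$ and is definable.

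For (2), I would write the two TTF triples as $(\U_{k},\D_{k},\Z_{k})$, so that $(\U_{k},\D_{k})$ is a $t$-structure and $(\D_{k},\Z_{k})$ a co-$t$-structure for $k=1,2$, and build the glued triple $(\U,\D,\Z)$ by gluing the $t$-structures and the co-$t$-structures along the two recollements of the ladder. Gluing the $t$-structures $(\U_{1},\D_{1})$ and $(\U_{2},\D_{2})$ along the upper recollement by the Beilinson--Bernstein--Deligne construction yields a $t$-structure $(\U,\D)$ whose coaisle is precisely $\{T : i^{!}T\in\D_{1},\ j^{*}T\in\D_{2}\}=\D$; in particular $\D$ is cosuspended, and by (1) it is definable. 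It remains to equip $\D$ with a right adjacent co-$t$-structure realising the same class $\D$.

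This is the crux, and the precise reason the ladder is indispensable: whereas $t$-structures glue along any recollement, a co-$t$-structure does not, and I would instead use the \emph{second} recollement assembled from the extra adjoints, namely $\mathcal{R}'\colon \T_{2}\xrightarrow{\,j_{*}\,}\T\xrightarrow{\,i^{!}\,}\T_{1}$ with adjoint triples $j^{*}\dashv j_{*}\dashv j^{\#}$ and $i_{*}\dashv i^{!}\dashv i_{\#}$. Since $(\mathcal{R}')^{\mathsf{op}}$ is an ordinary recollement, the BBD construction applied in the opposite categories glues the co-$t$-structures $(\D_{2},\Z_{2})$ and $(\D_{1},\Z_{1})$ along $\mathcal{R}'$; because the central functor of $\mathcal{R}'$ is $i^{!}$ while its outer left adjoint towards $\T_{2}$ is $j^{*}$, the left-hand class of the resulting co-$t$-structure is $\{T : j^{*}T\in\D_{2},\ i^{!}T\in\D_{1}\}=\D$. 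Hence $\D$ is at once the coaisle of the glued $t$-structure and the left-hand class of the glued co-$t$-structure, so $(\U,\D,\Z)$ is a TTF triple and $\D$ is a cosuspended TTF class. For nondegeneracy I would use that in a recollement the pairs $\{i^{*},j^{*}\}$ and $\{i^{!},j^{*}\}$ are jointly conservative (if $j^{*}T=0$ then $T\cong i_{*}i^{!}T$, and then $i^{!}T=0$ forces $T=0$); since $i^{*},i^{!},j^{*}$ commute with the shift, the identities $\bigcap_{n}\U_{k}[n]=0=\bigcap_{n}\D_{k}[n]$ propagate to $\bigcap_{n}\U[n]=0=\bigcap_{n}\D[n]$, so the glued $t$-structure is nondegenerate.

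The main obstacle I anticipate is exactly this matching step: one must check that the left-hand class of the co-$t$-structure glued along $\mathcal{R}'$ coincides with the coaisle of the $t$-structure glued along the upper recollement, both equal to $\{T : i^{!}T\in\D_{1},\ j^{*}T\in\D_{2}\}$, so that the two glued structures share the middle term $\D$ and genuinely assemble into one TTF triple. Everything hinges on the ladder promoting $i^{!}$ (and dually $j_{*}$) to a left adjoint through the adjunction $(i^{!},i_{\#})$: this is what supplies the recollement $\mathcal{R}'$ used in (2) and, simultaneously, the compactness of $i_{*}$ used in (1).
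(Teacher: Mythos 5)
Your argument for part (1) is correct, and it is in substance the argument underlying the paper's citation of \cite[Theorem 5.9]{PV19}: the extra adjunction $i^{!}\dashv i_{\#}$ forces $i^{!}$ to preserve coproducts, hence $i_{*}$ preserves compact objects by Neeman's criterion \cite[Theorem 5.1]{N}, and then $\D$ is cut out by the set of morphisms $i_{*}(S_{1})\cup j_{!}(S_{2})$ between compact objects. Likewise, your BBD step in part (2) (that $\D$ is the coaisle of the glued $t$-structure, hence cosuspended) and your propagation of nondegeneracy via the joint conservativity of $\{i^{*},j^{*}\}$ and $\{i^{!},j^{*}\}$ are fine.

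The gap is the central step of part (2), where you claim that ``the BBD construction applied in the opposite categories glues the co-$t$-structures $(\D_{2},\Z_{2})$ and $(\D_{1},\Z_{1})$ along $\mathcal{R}'$''. This rests on the premise that a co-$t$-structure in $\T_{k}$ becomes a $t$-structure in $\T_{k}^{\mathsf{op}}$, and that premise is false: both notions are self-dual. If $(\X,\Y)$ is a $t$-structure in a triangulated category then, up to shift, $(\Y,\X)$ is a $t$-structure in the opposite category; and if $(\A,\B)$ is a co-$t$-structure then, up to shift, $(\B,\A)$ is again a \emph{co}-$t$-structure in the opposite category. Dualization never interchanges the two notions, because the direction of the Hom-orthogonality relative to the shift is reversed in a co-$t$-structure. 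Consequently BBD gluing gives you nothing when applied to $(\mathcal{R}')^{\mathsf{op}}$, and there is no general theorem gluing co-$t$-structures along a recollement by preimage formulas: truncation triangles for co-$t$-structures (weight decompositions) are not functorial, which is exactly why gluing weight structures \cite{Bondarko} is a genuinely harder statement than BBD, with the glued classes described via extension closures of images of $i_{*}$ and $j_{!}$ rather than via preimages under $i^{!}$ and $j^{*}$. Producing the right adjacent co-$t$-structure for $\D$ is precisely the hard content of \cite[Theorem 5.9]{PV19}, and it comes not from any gluing of co-$t$-structures but from purity/cosilting theory: once one knows that $(\U,\D)$ is a $t$-structure (BBD), that $\D$ is definable (your part (1)), and that the $t$-structure is nondegenerate, the correspondence between such $t$-structures and pure-injective cosilting objects (cf. \cite[Lemma 4.8]{AMV17}, \cite[Theorem 6.13]{A19}) supplies the right adjacent co-$t$-structure and hence the TTF property. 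This finishing step is what your proof is missing, and it cannot be replaced by the duality argument you propose.
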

\begin{proof}
    The assumption that ``$i_*$ preserves compact objects" in \cite[Theorem 5.9]{PV19} is equivalent to the statement that the recollement can extend one step downwards, i.e., it can be completed to a ladder of height of 2 {\color{black}(\cite[Theorem 4.1]{N})}. Therefore, (1) follows from the proof of \cite[Theorem 5.9]{PV19} and (2) is derived from \cite[Theorem 5.9(1)]{PV19}.
\end{proof}

Now, let us move to the case of cosilting objects. 

\begin{Lem}\label{lem:definable}\textnormal{(\cite[Lemma 4.8]{AMV17})}
Let $\T$ be a compactly generated triangulated category and $C$ a cosilting object in $\T$. Then 
$C$ is pure-injective if and only if ${^{\perp_{>0}}C}$ is definable.
\end{Lem}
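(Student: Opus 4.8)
The plan is to prove the biconditional $C$ pure-injective $\iff$ ${}^{\perp_{>0}}C$ definable by exploiting the standard bijection between cosilting objects (up to equivalence) and the coaisles they generate. Throughout, the key object is the coaisle $\Y := {}^{\perp_{>0}}C$ of the $t$-structure $({}^{\perp_{<0}}C, {}^{\perp_{>0}}C)$ associated to $C$; note $\Prod(C)$ is the heart intersected with the correct shifts, and more usefully $\Prod(C) = \Y \cap \Y[1]^{\perp}$-type data is recoverable from $\Y$. I would first recall (as a black box, since it is a standard fact in this theory) that a cosilting $t$-structure always has injective cogenerator $\Prod(C)$, and that $\Y$ determines $C$ up to equivalence.

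\textbf{Forward direction ($C$ pure-injective $\Rightarrow$ ${}^{\perp_{>0}}C$ definable).} First I would use purity to describe $\Y = {}^{\perp_{>0}}C$ as a kernel of Hom-functors. Since $C$ is pure-injective, the functor $\T(-, C)$ sends pure triangles to exact sequences; I would show that this forces $\Y$ to be closed under products, pure subobjects, and pure quotients (equivalently, closed under the operations defining a definable class). The concrete route is to realise ${}^{\perp_{>0}}C = \{X \mid \T(X, C[k]) = 0,\ k>0\}$ and to argue that each condition $\T(-, C[k]) = 0$ cuts out a definable subcategory when $C[k]$ is pure-injective: because $\T$ is compactly generated, $\T(-, C[k])$ is determined by its restriction along a set of compact objects and their morphisms, and pure-injectivity of $C[k]$ guarantees that the vanishing is detected by a \emph{set} of maps in $\T^c$. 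Assembling these over all $k>0$ (a countable intersection of definable classes is definable) yields that $\Y$ is definable.

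\textbf{Converse direction (${}^{\perp_{>0}}C$ definable $\Rightarrow$ $C$ pure-injective).} Here I would argue via the injective cogenerator of the heart together with the characterisation of pure-injectivity by products. Since $\Y = {}^{\perp_{>0}}C$ is definable, it is closed under products and directed colimits in the relevant sense, and in particular $\Prod(C)$ — recovered from $\Y$ intersected with $\Y[1]^\perp$ (the objects of the coaisle orthogonal to its shift) — inherits closure under products and pure subobjects. The standard criterion is that an object is pure-injective precisely when the summation/evaluation map $C^{(I)} \to C^I$ from any coproduct into the corresponding product splits through $C$; definability of $\Y$ provides exactly the closure under products and the functorial splitting needed to produce this retraction, forcing $C$ to be a pure-injective object.

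\textbf{Main obstacle.} The delicate point is the converse: translating the purely subcategory-level datum ``$\Y$ definable'' into the object-level statement ``$C$ is pure-injective.'' The forward direction is essentially formal manipulation of the definition of purity, but the converse requires extracting a splitting for the pure-injectivity test from the abstract closure properties of a definable class. I expect to need the fact that in a compactly generated triangulated category the pure-injective objects are exactly those whose representable functor $\T(-, C)$ turns the canonical pure-exact sequences into split-exact ones, and to combine this with the product-closure and the fact that $C$ is the injective cogenerator of the heart of a definable coaisle. Getting the retraction $C^I \to C$ to respect the triangulated structure — rather than merely existing in the module category of the heart — is where the argument must be handled carefully, and is the step I would flag as the crux.
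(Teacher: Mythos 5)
Your proposal has a genuine gap in each direction; note also that the paper itself gives no argument for this lemma (it is quoted verbatim from \cite[Lemma 4.8]{AMV17}), so your attempt has to stand entirely on its own.

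\textbf{The forward direction rests on a false claim.} You reduce definability of ${}^{\perp_{>0}}C$ to the assertion that for a pure-injective object $E$ the single-degree orthogonal ${}^{\perp_0}E=\{X\in\T \mid \T(X,E)=0\}$ is definable, and then intersect over the shifts $C[k]$, $k>0$. That assertion is false: take $\T=\D(\mathbb{Z})$ and $E=\mathbb{Z}_p$ (the $p$-adic integers, as a stalk complex), which is pure-injective. Each $\mathbb{Z}/p^n$ lies in ${}^{\perp_0}E$ since $\mathbb{Z}_p$ is torsion-free, but the diagonal map $\mathbb{Z}_p\to\prod_n\mathbb{Z}/p^n$ is a pure monomorphism of abelian groups, hence splits by pure-injectivity of $\mathbb{Z}_p$; therefore $\mathbb{Z}_p$ is a direct summand of $\prod_n\mathbb{Z}/p^n$ and $\T\bigl(\prod_n\mathbb{Z}/p^n,E\bigr)\neq 0$. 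So ${}^{\perp_0}E$ is not closed under products and cannot be definable. The underlying problem is structural: left orthogonals of a single object are not controlled under maps \emph{out of} products, so pure-injectivity of $C[k]$ alone ``detects vanishing by compacts'' in no useful sense. What makes the actual lemma true is the cosilting hypothesis used globally: ${}^{\perp_{>0}}C$ is closed under products because it is a coaisle (a right orthogonal of the aisle), pure-injectivity of $C$ gives closure under pure subobjects and under directed homotopy colimits, and one then invokes the characterization of definable subcategories by exactly these closure properties. Your degree-by-degree decomposition throws away precisely the structure that the proof needs.

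\textbf{The converse direction is an outline of hope, not an argument.} You correctly name Krause's criterion ($C$ is pure-injective iff for every set $I$ the summation map $C^{(I)}\to C$ factors through the canonical map $C^{(I)}\to C^I$), but you never construct the factorization, and the step you flag as ``the crux'' is exactly the step you leave unproved. The missing argument is short and concrete: complete the canonical pure monomorphism $\iota\colon C^{(I)}\to C^I$ to a triangle $C^{(I)}\to C^I\to X\to C^{(I)}[1]$. Since $\mathcal{V}={}^{\perp_{>0}}C$ is definable, it contains $C^I$ and $C^{(I)}$ (a coproduct is a pure subobject of the product), and it contains the cone $X$ because definable classes are closed under pure quotients: for $f\colon A\to B$ in $\T^c$ and $g\colon A\to X$, purity of the triangle lifts $g$ through $C^I\to X$, and one then uses surjectivity of $\T(f,C^I)$. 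Now apply $\T(-,C)$ to the rotated triangle $X[-1]\to C^{(I)}\xrightarrow{\iota} C^I\to X$: the obstruction to extending the summation map along $\iota$ lies in $\T(X[-1],C)\simeq\T(X,C[1])$, which vanishes precisely because $X\in{}^{\perp_{>0}}C$. There is no issue of a retraction ``respecting the triangulated structure''; the entire content is the vanishing of this one obstruction group, which your outline never identifies.
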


The following theorem is proved implicitly in \cite[Theorem 5.8, Theorem 5.9]{PV19}. For the convenience of the reader, we include a proof.

\begin{Theo}\label{thm:glue pure-inj cosilting}
Let the following diagram be a ladder of algebraically compactly generated triangulated categories of height 2:
$$\xymatrix{
\mathscr{T}_1
\ar@<-2.25ex>[rr]|-{i_{\#}}
\ar@<0.75ex>[rr]|-{i_{*}}
&&
\ar@<-2.25ex>[ll]|-{i^{*}}
\ar@<0.75ex>[ll]|-{i^{!}}
\T
\ar@<-2.25ex>[rr]|-{j^{\#}}
\ar@<0.75ex>[rr]|-{j^{*}}
&&
\ar@<-2.25ex>[ll]|-{j_{!}}
\ar@<0.75ex>[ll]|-{j_{*}}
\mathscr{T}_2}$$
Let $C_1$ and $C_2$ be pure-injective cosilting objects in $\mathscr{T}_1$ and $\mathscr{T}_2$, respectively. Then $j_{*}(C_2)\oplus U$ is also a pure-injective cosilting object.
\end{Theo}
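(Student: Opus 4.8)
The plan is to reduce pure-injectivity of $C := j_*(C_2) \oplus U$ to the definability of its coaisle via Lemma~\ref{lem:definable}, after first producing $C$ as a genuine cosilting object through Proposition~\ref{prop:sufficient glue} and Theorem~\ref{thm:glue cosilting}. Since the whole argument only needs $\mathscr{T}_1$, $\T$, $\mathscr{T}_2$ to be compactly generated, and algebraically compactly generated categories are in particular compactly generated, both gluing lemmas and the definability criterion are available.

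First I would record the consequences of pure-injectivity for the outer objects. As $\mathscr{T}_1$ and $\mathscr{T}_2$ are compactly generated and $C_1, C_2$ are pure-injective cosilting, Lemma~\ref{lem:definable} shows that the coaisles $\D_1 := {^{\perp_{>0}}C_1}$ and $\D_2 := {^{\perp_{>0}}C_2}$ are definable. Each is automatically cosuspended (any coaisle satisfies $\D_i[-1] \subseteq \D_i$) and nondegenerate (a cosilting object generates its category). The one point I must still secure is that $\D_2$ is a cosuspended TTF class, i.e.\ that the $t$-structure $({^{\perp_{<0}}C_2}, {^{\perp_{>0}}C_2})$ admits the right adjacent co-$t$-structure $({^{\perp_{>0}}C_2}, (^{\perp_{\geq 0}}C_2)^{\perp})$. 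This is exactly the characterization of pure-injective cosilting coaisles as nondegenerate definable cosuspended TTF classes implicit in \cite[Theorem 5.8]{PV19}, and it is the genuine input of the theorem.

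With the adjacent co-$t$-structure for $C_2$ in hand, Proposition~\ref{prop:sufficient glue} applies and makes $(C_1, C_2)$ a gluable cosilting pair for the upper recollement; Theorem~\ref{thm:glue cosilting} then gives that $C = j_*(C_2)\oplus U$ is cosilting in $\T$ with associated $t$-structure equal to the glued $t$-structure. In particular its coaisle is the glued coaisle
$${^{\perp_{>0}}C} = \{T \in \T \mid i^{!}(T)\in {^{\perp_{>0}}C_1},\ j^{*}(T)\in {^{\perp_{>0}}C_2}\} = \D,$$
which is precisely the glued subcategory $\D$ of Lemma~\ref{lem:glue ttf} attached to $\D_1$ and $\D_2$.

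Finally, since the diagram is a ladder of compactly generated categories of height $2$, Lemma~\ref{lem:glue ttf} applies verbatim: part (1) turns definability of $\D_1, \D_2$ into definability of $\D$ (and part (2) confirms $\D$ is again a nondegenerate cosuspended TTF class, so the construction is closed under gluing). As $\T$ is compactly generated and $C$ is cosilting with definable coaisle ${^{\perp_{>0}}C} = \D$, Lemma~\ref{lem:definable} concludes that $C$ is pure-injective. I expect the main obstacle to be the step asserting that a pure-injective cosilting coaisle is a cosuspended TTF class: it is what furnishes the adjacent co-$t$-structure demanded by Proposition~\ref{prop:sufficient glue}, whereas the remainder is bookkeeping with the glued $t$-structure and the two gluing lemmas.
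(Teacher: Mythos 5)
Your proposal is correct and takes essentially the same route as the paper: secure the right adjacent co-$t$-structure from pure-injectivity (the paper cites \cite[Theorem 6.13]{A19} for this rather than \cite[Theorem 5.8]{PV19}, but it is the same characterization), glue via Proposition~\ref{prop:sufficient glue} and Theorem~\ref{thm:glue cosilting}, get definability of the glued coaisle from Lemma~\ref{lem:glue ttf}(1), and conclude by Lemma~\ref{lem:definable}. The only small inaccuracy is your opening claim that ``the whole argument only needs compactly generated'': the algebraic hypothesis is exactly what licenses the adjacent co-$t$-structure step, which you nevertheless correctly single out as the genuine input.
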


\begin{proof}
By \cite[Theorem 6.13]{A19}, the t-structures $({^{\perp_{<0}}C_1},{^{\perp_{>0}}C_1})$ and $({^{\perp_{<0}}C_2},{^{\perp_{>0}}C_2})$ have adjacent co-$t$-structures. It follows from Proposition~\ref{prop:sufficient glue} that $j_{*}(C_2)\oplus U$ is a cosilting object. Since $^{\perp_{>0}}(j_{*}(C_2)\oplus U)$ is glued from ${^{\perp_{>0}}C_1}$ and ${^{\perp_{>0}}C_2}$, both of which are definable, it follows from Lemma~\ref{lem:glue ttf}(1) that $^{\perp_{>0}}(j_{*}(C_2)\oplus U)$ is definable. Thus $j_{*}(C_2)\oplus U$ is pure-injective by Lemma~\ref{lem:definable}.
\end{proof}

The last theorem informs us that in a ladder of algebraically compactly generated triangulated categories of height 2, a pair of pure-injective cosilting objects is always gluable along the upper recollement.

\section{Gluing cosilting complexes}\label{sec:glue complexes}
In the context of derived categories of rings, it is important to acknowledge cosilting complexes. Generally, cosilting complexes are pure-injective cosilting objects (\cite[Proposition 3.10]{MV18}).

\begin{Def}
Let $A$ be a ring. An object $\cpx{X}$ in $\D(A)$ is a {\em cosilting complex} (\cite{ZW17}) if satisfies
\begin{enumerate}
    \item ${\D(A)}({\cpx{X}}^{J}, \cpx{X}[i])=0$ for $i>0$ and any set $J$, and
    \item thick($\Prod(\cpx{X}$))=$K^{b}(A\mbox{-}\mathsf{Inj})$.
\end{enumerate}
\end{Def}

Let $A$ be a ring and $(\D^{\leq 0}, \D^{\geq 0})$ be the standard $t$-structure in $\D(A)$, a cosuspended subcategory $\mathcal{V}$ of $\D(A)$ is called {\em cointermediate} (\cite[Definition 6.14]{A19}) if there are integers $n\leq m$ such that $\D^{\geq m}\subseteq \mathcal{V}\subseteq \D^{\geq n}$.

\begin{Lem}\label{lem:glue cointermediate}
Let 
$$\xymatrix{
\D(A)\ar@<-2.25ex>[rr]|-{i_{\#}}\ar@<0.75ex>[rr]|-{i_{*}}
&&
\ar@<-2.25ex>[ll]|-{i^{*}}
\ar@<0.75ex>[ll]|-{i^{!}}
\D(B)
\ar@<-2.25ex>[rr]|-{j^{\#}}
\ar@<0.75ex>[rr]|-{j^{*}}
&&
\ar@<-2.25ex>[ll]|-{j_{!}}
\ar@<0.75ex>[ll]|-{j_{*}}
\D(C)}$$
be a ladder of derived categories of height 2. Let $\Y_C$ and $\Y_A$ be subcategories of $\D(C)$ and $\D(A)$, respectively. Define a subcategory of $\D(B)$
$$\Y_B:=\{Y\in \D(B)~|~j^*(Y)\in \Y_C,i^!(Y)\in \Y_A\}.$$
Then $\Y_C$ and $\Y_A$ are cointermediate if and only if so is $\Y_B$.
\end{Lem}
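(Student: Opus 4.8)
The plan is to reduce the whole equivalence to a single uniform boundedness property of the recollement functors with respect to the three standard $t$-structures, and then transfer the two defining inclusions of \emph{cointermediate} separately in each direction. Throughout I take all subcategories to be strictly full and additive, so that $0\in\Y_A$, $0\in\Y_C$ and $0\in\Y_B$. The key input I would isolate first is the following: there is a constant $c\ge 0$, depending only on the ladder, such that each of the four functors $i_*,\,j_*,\,i^!,\,j^*$ carries $\D(-)^{\ge k}$ into $\D(-)^{\ge k-c}$ for every $k\in\mathbb Z$; that is, these functors preserve the standard coaisle up to a bounded shift. For a ladder of derived categories of rings of height $2$ this holds because the recollement functors have finite cohomological amplitude, and it is precisely the one-sided bound on the positive part that the cointermediate condition detects. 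I expect establishing (or pinning down the correct citation for) this boundedness to be the main obstacle, as everything else is formal bookkeeping once it is available.

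Granting the boundedness, for the forward implication I would assume $\D(C)^{\ge m_C}\subseteq\Y_C\subseteq\D(C)^{\ge n_C}$ and $\D(A)^{\ge m_A}\subseteq\Y_A\subseteq\D(A)^{\ge n_A}$, and first check that $\Y_B$ is cosuspended: since $j^*$ and $i^!$ are triangulated, $Y\in\Y_B$ forces $Y[-1]\in\Y_B$, and applying $j^*,i^!$ to a triangle whose outer terms lie in $\Y_B$ shows closure under extensions. For the upper inclusion, if $Y\in\Y_B$ then $j^*(Y)\in\D(C)^{\ge n_C}$ and $i^!(Y)\in\D(A)^{\ge n_A}$, so the recollement triangle $i_*i^!(Y)\to Y\to j_*j^*(Y)\to i_*i^!(Y)[1]$ presents $Y$ as an extension of two objects lying in $\D(B)^{\ge n_B}$ with $n_B:=\min(n_A,n_C)-c$, whence $Y\in\D(B)^{\ge n_B}$. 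For the lower inclusion, setting $m_B:=\max(m_A,m_C)+c$, any $Y\in\D(B)^{\ge m_B}$ satisfies $j^*(Y)\in\D(C)^{\ge m_C}\subseteq\Y_C$ and $i^!(Y)\in\D(A)^{\ge m_A}\subseteq\Y_A$, so $Y\in\Y_B$. Thus $\Y_B$ is cointermediate.

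For the converse I would assume $\D(B)^{\ge m_B}\subseteq\Y_B\subseteq\D(B)^{\ge n_B}$ and exploit that the sections $i_*$ and $j_*$ detect membership. Using $j^*i_*=0$, $i^!i_*\cong\mathrm{id}$ together with $0\in\Y_C$, one obtains $X\in\Y_A\iff i_*(X)\in\Y_B$; dually, using $i^!j_*=0$, $j^*j_*\cong\mathrm{id}$ and $0\in\Y_A$, one obtains $Z\in\Y_C\iff j_*(Z)\in\Y_B$. Cosuspendedness of $\Y_A$ and $\Y_C$ then follows from that of $\Y_B$, because $i_*$ and $j_*$ are triangulated and fully faithful. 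For the inclusions on the $A$-side, if $X\in\Y_A$ then $i_*(X)\in\Y_B\subseteq\D(B)^{\ge n_B}$, so $X\cong i^!i_*(X)\in\D(A)^{\ge n_B-c}$; and if $X\in\D(A)^{\ge m_B+c}$ then $i_*(X)\in\D(B)^{\ge m_B}\subseteq\Y_B$, so $X\cong i^!i_*(X)\in\Y_A$. This yields $\D(A)^{\ge m_B+c}\subseteq\Y_A\subseteq\D(A)^{\ge n_B-c}$, and the identical argument with $j_*,j^*$ in place of $i_*,i^!$ gives $\D(C)^{\ge m_B+c}\subseteq\Y_C\subseteq\D(C)^{\ge n_B-c}$. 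Hence both $\Y_A$ and $\Y_C$ are cointermediate, completing the equivalence.

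The only genuinely nontrivial ingredient, and the step I would scrutinize most, is the uniform shift bound for $i_*,j_*,i^!,j^*$; once it is in hand, both directions are driven by the single recollement triangle and the adjunction identities $i^!i_*\cong\mathrm{id}$, $j^*j_*\cong\mathrm{id}$, $j^*i_*=0$, $i^!j_*=0$.
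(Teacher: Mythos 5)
Your proposal follows essentially the same route as the paper: the paper's proof is precisely ``uniform shift bounds for $i_*,j_*,i^!,j^*$ against the standard coaisles, plus formal bookkeeping,'' and your bookkeeping in both directions is correct --- the forward direction via the recollement triangle $i_*i^!(Y)\to Y\to j_*j^*(Y)$ and extension-closure of $\D(B)^{\geq n_B}$, the converse via detection of membership through the fully faithful functors $i_*,j_*$ together with $i^!i_*\cong\mathrm{id}$, $j^*j_*\cong\mathrm{id}$, $j^*i_*=0=i^!j_*$ (you even treat cosuspendedness explicitly, which the paper leaves implicit). However, the step you left open is not a citation to be pinned down; it is the entire mathematical content of the lemma, and the justification you offer for it is circular: ``the recollement functors have finite cohomological amplitude'' is simply a restatement of the bound you want. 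Note also that, stated for all four functors, the bound is genuinely a consequence of the height-2 hypothesis: in a bare recollement of derived categories $i_*$ and $j^*$ always admit such bounds, but $i^!$ and $j_*$ need not, so any correct justification must visibly use the ladder.

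Here is how the bound is actually proved, which is what the paper does. In any recollement of compactly generated categories the left adjoints $i^*$ and $j_!$ preserve compact objects; by Neeman's criterion, the existence of the lower recollement (i.e.\ of $i_{\#}$ and $j^{\#}$) is equivalent to $i_*$ and $j^*$ preserving compact objects as well. Since compact objects of $\D(R)$ are perfect complexes, one may fix
$i^{*}(B)\in K^{[t_{1},t_{2}]}(A\mbox{-}\mathsf{proj})$,
$i_{*}(A)\in K^{[s_{1},s_{2}]}(B\mbox{-}\mathsf{proj})$,
$j_{!}(C)\in K^{[u_{1},u_{2}]}(B\mbox{-}\mathsf{proj})$,
$j^{*}(B)\in K^{[v_{1},v_{2}]}(C\mbox{-}\mathsf{proj})$.
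Cohomology is computed by Hom from the ring and transfers along the adjunctions:
$\D(A)(A,i^!(Z)[i])\cong\D(B)(i_*(A),Z[i])$,
$\D(B)(B,j_*(Z)[i])\cong\D(C)(j^*(B),Z[i])$,
$\D(B)(B,i_*(X)[i])\cong\D(A)(i^*(B),X[i])$, and
$\D(C)(C,j^*(Y)[i])\cong\D(B)(j_!(C),Y[i])$;
and Hom from a perfect complex concentrated in degrees $[a,b]$ into an object of $\D^{\geq k}$ vanishes in all degrees $i<k-b$. This yields your constant, e.g.\ $c=\max\{t_{2},s_{2},u_{2},v_{2},0\}$, and with this ingredient supplied your argument is complete and coincides, up to reorganization, with the paper's proof.
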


\begin{proof}
    Set
$$K^{[a,b]}(A\mbox{-}\mathsf{proj}):=\{\cpx{P}|\cpx{P}\in K^{b}(A\mbox{-}\mathsf{proj})~\text{with}~ P^{i}=0,~i\notin [a,b]\}, a\leq b\in\mathbb{Z}.$$ 
Since $i^{*}, i_{*}, j_{!}, j^{*}$ preserve compact objects (\cite[Theorem 5.1]{N}), we can assume 
$$i^{*}(B)\in K^{[t_{1},t_{2}]}(A\mbox{-}\mathsf{proj}),~i_{*}(A)\in K^{[s_{1},s_{2}]}(B\mbox{-}\mathsf{proj}),$$
$$ j_{!}(C)\in K^{[u_{1},u_{2}]}(B\mbox{-}\mathsf{proj}),~j^{*}(B)\in K^{[v_{1},v_{2}]}(C\mbox{-}\mathsf{proj}).$$ 

$(\Rightarrow)$ 
By our assumption, there are integers $m_1>n_1$, $m_2>n_2$ such that
$$\D^{\geq m_{1}}(A)\subseteq \Y_A\subseteq \D^{\geq n_{1}}(A),~\D^{\geq m_{2}}(C)\subseteq \Y_C\subseteq \D^{\geq n_{2}}(C).$$ 
Denote $m=\text{max}\{m_{1}+s_{2}, m_{2}+u_{2}\}$, $n=\text{min}\{n_{1}-t_{2}, n_{2}-v_{2}\}$, 
so it suffices to prove $\D^{\geq m}(B)\subseteq \Y_{B}\subseteq \D^{\geq n}(B)$.

Let $\cpx{Z}\in \D^{\geq m}(B)$, then 
$${\D(A)}(A, i^{!}(\cpx{Z})[i])\simeq {\D(B)}(i_{*}(A), \cpx{Z}[i])\simeq 0, i\leq m_{1}-1,$$
$${\D(C)}(C,j^{*}(\cpx{Z})[i])\simeq {\D(B)}(j_{!}(C), \cpx{Z}[i])\simeq 0, i\leq m_{2}-1,$$
these imply that
$$i^{!}(\cpx{Z})\in \D^{\geq m_{1}}(A)\subseteq\Y_A~\text{and}~ j^{*}(\cpx{Z})\in \D^{\geq m_{2}}(C)\subseteq \Y_C.$$
Hence, we have $\cpx{Z}\in \Y_B$.

Let $\cpx{Z}\in \Y_B$, then
$$ j^*(\cpx{Z})\in \Y_C\subseteq \D^{\geq n_{2}}(C)~\text{and}~i^!(\cpx{Z})\in \Y_A\subseteq \D^{\geq n_{1}}(A)$$
Consider the triangle
$$j_{!}j^{*}(B)\longrightarrow B\longrightarrow i_{*}i^{*}(B)\longrightarrow j^{*}j_{!}(B)[1],$$
since for $i\leq n-1$, there are
$${\D(B)}(i_{*}i^{*}(B),\cpx{Z}[i])\simeq {\D(A)}(i^{*}(B),i^{!}(\cpx{Z})[i])\simeq 0,$$ 
$${\D(B)}(j_{!}j^{*}(B),\cpx{Z}[i])\simeq {\D(C)}(j^{*}(B),j^{*}(\cpx{Z})[i])\simeq 0,$$
then we have ${\D(B)}(B, \cpx{Z}[i])\simeq 0, i\leq n-1$, thus $\cpx{Z}\in \D^{\geq n}(B)$. 

In conclusion, we obtain $\D^{\geq m}(B)\subseteq \Y_B\subseteq \D^{\geq n}(B)$ which means $\Y_B$ is cointermediate.

$(\Leftarrow)$ Since $\Y_C=j^*j_*(\Y_C)\subseteq j^*(\Y_B)\subseteq\Y_C$ and $\Y_A=i^!i_*(\Y_A)\subseteq i^!(\Y_B)\subseteq \Y_A$, then $\Y_C=j^*(\Y_B)$ and $\Y_A=i^!(\Y_B)$, so 
it suffices to show $j^*(\Y_{B})$ and $i^!(\Y_B)$ are cointermediate subcategories in $\D(C)$  and $\D(A)$ respectively. Under our condition, we can assume $\D^{\geq m}(B)\subseteq \Y_{B}\subseteq \D^{\geq n}(B)$ for integers $n<m$.

We first prove $j^*(\Y_{B})$ is cointermediate in $\D(C)$.  Since 
$$j_!(C)[u_{2}]\in K^{[u_{1}-u_{2},0]}(B\mbox{-}\mathsf{proj})~\text{and}~\Y_B\subseteq \D^{\geq n}(B),$$
then for $i\leq 0$,
\begin{align*}
    {\D(C)}(C, j^{*}(\Y_B)[n-u_{2}-1][i])&\simeq {\D(C)}(C, j^{*}(\Y_B)[n-u_{2}+i-1])\\
    &\simeq {\D(B)}(j_!(C)[u_{2}], \Y_{B}[n+i-1])\\
    &= 0
\end{align*}
this implies $j^{*}(\Y_B)\subseteq \D^{\geq n-u_2-1}(C)$. Note that for $i\leq -m+1$, we have 
$${\D(B)}(B,j_*(\D^{\geq v_2-m+2}(C))[i])\simeq {\D(C)}(j^*(B), \D^{\geq v_2-m+2}(C)[i])= 0,$$
then $j_*(\D^{\geq v_2-m+2}(C))\subseteq \D^{\geq m}(B)\subseteq \Y_{B}$. Hence, $\D^{\geq v_2-m+2}(C)\subseteq j^*(\Y_B)$. Therefore, $j^*(\Y_{B})$ is cointermediate in $\D(C)$.

By a similar proof as above, one can show that 
$$\D^{\geq t_{2}-m+2}(A)\subseteq i^!(\Y_B)\subseteq \D^{\geq n-s_2-1}(A).$$
This implies that $i^!(\Y_B)$ is cointermediate in $\D(A)$.
\end{proof}

\begin{Theo}\label{thm:glue cosilting complex}
Let 
$$\xymatrix{
\D(A)\ar@<-2.25ex>[rr]|-{i_{\#}}\ar@<0.75ex>[rr]|-{i_{*}}
&&
\ar@<-2.25ex>[ll]|-{i^{*}}
\ar@<0.75ex>[ll]|-{i^{!}}
\D(B)
\ar@<-2.25ex>[rr]|-{j^{\#}}
\ar@<0.75ex>[rr]|-{j^{*}}
&&
\ar@<-2.25ex>[ll]|-{j_{!}}
\ar@<0.75ex>[ll]|-{j_{*}}
\D(C)}$$
be a ladder of derived categories of height 2.
If $\cpx{X}$ and $\cpx{Y}$ are cosilting complexes in $\D(A)$ and $\D(C)$, respectively, then so is $j_{*}(\cpx{Y})\oplus U$ in $\D(B)$.
\end{Theo}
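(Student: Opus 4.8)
The plan is to bootstrap from the pure-injective gluing of the previous section to the sharper ``cosilting complex'' conclusion by controlling where the glued coaisle sits inside the standard $t$-structure. First I would record that a cosilting complex is in particular a pure-injective cosilting object (\cite[Proposition 3.10]{MV18}), and that the derived categories $\D(A)$, $\D(B)$, $\D(C)$ are algebraically compactly generated. Thus the hypotheses of Theorem~\ref{thm:glue pure-inj cosilting} are met with $C_1=\cpx{X}$ and $C_2=\cpx{Y}$, and that theorem already yields that $C:=j_{*}(\cpx{Y})\oplus U$ is a pure-injective cosilting object in $\D(B)$. Moreover, by Theorem~\ref{thm:glue cosilting} the $t$-structure associated with $C$ is glued from those of $\cpx{X}$ and $\cpx{Y}$, so its coaisle is
$$
{^{\perp_{>0}}C}=\{Z\in\D(B)\mid i^{!}(Z)\in{^{\perp_{>0}}\cpx{X}},\ j^{*}(Z)\in{^{\perp_{>0}}\cpx{Y}}\}.
$$

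Next I would promote this pure-injective cosilting object to a cosilting complex. The key is the characterization on which this section rests: an object of the derived category of a ring is a cosilting complex if and only if it is a pure-injective cosilting object whose coaisle ${^{\perp_{>0}}}$ is a cointermediate cosuspended subcategory (in the sense of \cite[Definition 6.14]{A19}). Since $\cpx{X}$ and $\cpx{Y}$ are cosilting complexes, the subcategories ${^{\perp_{>0}}\cpx{X}}$ and ${^{\perp_{>0}}\cpx{Y}}$ are cointermediate. The displayed description of ${^{\perp_{>0}}C}$ is precisely the glued subcategory $\Y_B$ of Lemma~\ref{lem:glue cointermediate}, taken with $\Y_A={^{\perp_{>0}}\cpx{X}}$ and $\Y_C={^{\perp_{>0}}\cpx{Y}}$. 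Hence the forward implication of that lemma shows that ${^{\perp_{>0}}C}$ is cointermediate, and the characterization then forces $C=j_{*}(\cpx{Y})\oplus U$ to be a cosilting complex, which is the claim.

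I expect the main obstacle to be the correct formulation and justification of the characterization invoked in the second step, namely that, among pure-injective cosilting objects in $\D(A)$, the cosilting complexes are exactly those with a cointermediate coaisle. Establishing this requires translating the boundedness condition $\mathsf{thick}(\Prod(\cpx{X}))=K^{b}(A\mbox{-}\mathsf{Inj})$ from the definition of a cosilting complex into the two-sided containment $\D^{\geq m}\subseteq{^{\perp_{>0}}\cpx{X}}\subseteq\D^{\geq n}$ defining cointermediacy, and back; once this dictionary is in place, the gluing itself is purely formal, resting entirely on Theorem~\ref{thm:glue pure-inj cosilting} and Lemma~\ref{lem:glue cointermediate}. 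A secondary point to verify is that the identification of ${^{\perp_{>0}}C}$ with $\Y_B$ is compatible with the cosuspended TTF-class structure, so that the hypotheses of Lemma~\ref{lem:glue cointermediate} genuinely apply.
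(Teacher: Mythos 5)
Your proposal is correct and takes essentially the same route as the paper: the paper obtains the cosilting object via Proposition~\ref{prop:sufficient glue} (using the right adjacent co-$t$-structure from \cite[Theorem 3.13]{MV18}), shows the glued coaisle is cosuspended, definable and cointermediate via Lemma~\ref{lem:glue ttf} and Lemma~\ref{lem:glue cointermediate}, and concludes by \cite[Theorem 3.14]{MV18}, while your use of Theorem~\ref{thm:glue pure-inj cosilting} merely bundles the first two of these steps. The characterization you flag as the main obstacle is exactly the cited result \cite[Theorem 3.14]{MV18} (combined with Lemma~\ref{lem:definable}, which translates definability of the coaisle into pure-injectivity), so no new argument is needed there.
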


\begin{proof}
By \cite[Theorem 3.13]{MV18}, the $t$-structure $(^{\perp_{<0}}\cpx{Y}, {^{\perp_{>0}}\cpx{Y}})$ has a right adjacent co-$t$-structure, then $j_{*}(\cpx{Y})\oplus U$ is a cosilting object in $\D(B)$ by Proposition~\ref{prop:sufficient glue}. It follows from Lemma~\ref{lem:glue ttf} and Lemma~\ref{lem:glue cointermediate} that the coaisle of the associated $t$-structure of $j_{*}(\cpx{Y})\oplus U$ (see Theorem~\ref{thm:glue cosilting})
 $$\mathcal{V}=\{\cpx{U}\in \D(B)| i^{!}(\cpx{U})\in {^{\perp_{>0}}\cpx{X}}, j^{*}(\cpx{U})\in {^{\perp_{>0}}\cpx{Y}}\}$$
 is co-suspended, cointermediate, and definable. Hence, $j_{*}(\cpx{Y})\oplus U$ is a cosilting complex according to \cite[Theorem 3.14]{MV18}. We finish the proof.
\end{proof}

\begin{Bsp}
Let $A$ and $B$ be rings, and $M$ an $A$-$B$-bimodule. Let \[ \Lambda= \begin{bmatrix}
A & _{A}M_{B} \\
0 & B
\end{bmatrix}\] be the triangular matrix ring. 
By \cite[Example 3.4]{AKLY}, there is a ladder of height 2
$$\xymatrix{
\D(A)
\ar@<-2.25ex>[rr]|-{i_{\#}}
\ar@<0.75ex>[rr]|-{\Lambda e_{2}\otimes^{L}-}
&&
\ar@<-2.25ex>[ll]
\ar@<0.75ex>[ll]|-{e_{2}\Lambda\otimes-}
\D(\Lambda)
\ar@<-2.25ex>[rr]|-{j^{\#}}
\ar@<0.75ex>[rr]
&&
\ar@<-2.25ex>[ll]|-{\Lambda e_{1}\otimes^{L}}
\ar@<0.75ex>[ll]|-{j_{*}}
\D(B)}$$
where $e_{1}=\begin{bmatrix}
0 & 0 \\
0 & 1
\end{bmatrix}$  and $e_{2}=\begin{bmatrix}
1 & 0 \\
0 & 0
\end{bmatrix}$.

Assume $\cpx{X}$ and $\cpx{Y}$ are cosilting complexes in $\D(A)$ and $\D(B)$ respectively. According to Theorem~\ref{thm:glue cosilting complex}, the glued cosilting object of $\cpx{X}$ and $\cpx{Y}$ {\color{black}along the upper-recollement} is a cosilting complex of $\D(\Lambda)$.
\end{Bsp}

\begin{Bsp}\label{example: cosilting complex} 
Let $A$ be a hereditary algebra over a field $k$ with the quiver
$$
\xymatrix{\circ\ar@<-0.4ex>[r]_(1){2}_(0){1}
&\circ\ar@<-0.4ex>[r]_(1){3}
&\circ
}.$$
Thanks to \cite[Example 3.4]{AKLY}, we have a ladder of height 2 
$$\xymatrix{
\D(B)
\ar@<-2.25ex>[rrr]
\ar@<0.75ex>[rrr]|-{i_{*}=B\otimes_B^{L}-}
&&&
\ar@<-2.25ex>[lll]
\ar@<0.75ex>[lll]|-{\RHom_{A}(B,-)}
\D(A)
\ar@<-2.25ex>[rrr]|-{j^{\#}}
\ar@<0.75ex>[rrr]
&&&
\ar@<-2.25ex>[lll]|-{Ae_{3}\otimes^{L}}
\ar@<0.75ex>[lll]
\D(k)}$$
where $Ae_{3}=P_{3}, j^{\#}=\RHom_{A}(I_3,-)$, and the algebra $B$ is isomorphic to a hereditary algebra with the quiver
$$
\xymatrix{\circ\ar@<-0.4ex>[r]_(1){2}_(0){1}
&\circ
}.$$ 

Consider the cosilting complex $k$ in $\D(k)$, which corresponds to the standard $t$-structure $(\D^{\leq 0}, \D^{\geq 0})$ with an adjacent co-$t$-structure $(\D^{\geq 0}, \D^{\leq 0})$. Now, select the cosilting complex $D(B)$, the dual $\Hom_k(B,k)$ of $B$, in $\D(B)$. Since $I_3=P_1$ is a projective $A$-module, $\RHom_{A}(I_3,-)=\Hom_{A}(I_3,-)$.We then have
$$j^{\#}i_{*}(D(B))=\Hom_{A}(I_3,D(B))\simeq k^2\in \D^{\leq 0}.$$
This implies that $V$ in the decomposition triangle of $i_*(D(B))$ is zero. And we have $U\simeq i_*(D(B))\simeq I_1\oplus I_2$.
Therefore, the glued cosilting complex in $\D(A)$ of the pair $(D(B), k)$ is
$$j_{*}(k)\oplus U\simeq I_{3}\oplus I_{1}\oplus I_2\simeq D(A).$$
\end{Bsp}

\section{Compatibility of mutation with gluing}\label{sec:mutation}
In this section, we will explore the compatibility of cosilting gluing and cosilting mutation, with a particular focus on pure-injective cosilting objects.
\subsection{Notation}
\begin{Def}
Let $\A$ be an abelian category. A {\em torsion pair} (\cite{D66}) in $\A$ is a pair $(\mathcal{U}, \mathcal{V})$ of full subcategories satisfying 
\begin{enumerate}
    \item ${\A}(U,V)=0$, for all $U\in\mathcal{U}$ and $V\in\mathcal{V}$;
    \item For each object $X$ of $\A$, there is an exact sequence
$$0\longrightarrow U_{X}\longrightarrow X\longrightarrow V_{X}\longrightarrow 0$$
where $U_{X}\in\mathcal{U}$ and $V_{X}\in \mathcal{V}$.
\end{enumerate}
\end{Def}

\begin{Def}
    Let $\A, \B$ and $\C$ be abelian categories. A {\em recollement} (\cite{FP}), denoted by a triple $(\A, \B,\C)$, is a diagram
   $$\xymatrix{\A\ar^-{i_*=i_!}[r]&\B\ar^-{j^!=j^*}[r]
\ar^-{i^!}@/^1.2pc/[l]\ar_-{i^*}@/_1.6pc/[l]
&\C\ar^-{j_*}@/^1.2pc/[l]\ar_-{j_!}@/_1.6pc/[l]}$$
of abelian categories and additive functors such that:
\begin{enumerate}
    \item $(i^*,i_*),(i_!,i^!),(j_!,j^!)$ and $(j^*,j_*)$ are adjoint pairs;
    \item $i_*,j_*$ and $j_!$ are fully faithful functors;
    \item $\mathsf{Im}i_*=\mathsf{Ker}j^*$.
\end{enumerate}
\end{Def}

Let $(\mathcal{U}_{1}, \mathcal{V}_{1})$ and $(\mathcal{U}_{2}, \mathcal{V}_{2})$ be torsion pairs in $\A$ and $\C$, respectively. Along the recollement above, the {\em glued torsion pair} (\cite[Theorem 1]{MH}) $(\mathcal{U}, \mathcal{V})$ in $\B$ is defined as follows:
$$\mathcal{U}:=\{B\in \B|i^{*}(B)\in \mathcal{U}_{1}, j^{*}(B)\in \mathcal{U}_{2}\},$$
$$\mathcal{V}:=\{B\in \B|i^{!}(B)\in \mathcal{V}_{1}, j^{*}(B)\in \mathcal{V}_{2}\}.$$

Let $\T$ be a triangulated category, and let $(\X,\Y)$ be a $t$-structure on $\T$. Denote by $\HH$ the heart of $(\X,\Y)$, assume $\HH$ has a torsion pair $(\mathcal{U},\mathcal{V})$. According to \cite{HRS} (or \cite[Lemma 1.1.2]{P} or \cite[Proposition 2.3]{ALSV22}), the {\em left HRS-tilt $t$-structure} of $(\X,\Y)$ at $(\mathcal{U},\mathcal{V})$ is given by
$$({^t\X},{^t\Y}):=(\X[1]*\mathcal{U},\mathcal{V}[1]*\Y)$$
with heart ${^t\HH}:=\mathcal{V}[1]*\mathcal{U}$. The {\em right HRS-tilt $t$-structure} of $(\X,\Y)$ at $(\mathcal{U},\mathcal{V})$ is given by
$$({\X^t},{\Y^t}):=(\X*\mathcal{U}[-1],\mathcal{V}*\Y[-1])$$
with heart ${\HH^t}:=\mathcal{V}*\mathcal{U}[-1]$.

Given a recollement of triangulated categories
$$\xymatrix{\mathscr{T}_1\ar^-{i_*=i_!}[r]
&\mathscr{T}\ar^-{j^!=j^*}[r]\ar^-{i^!}@/^1.2pc/[l]\ar_-{i^*}@/_1.6pc/[l]
&\mathscr{T}_2.\ar^-{j_*}@/^1.2pc/[l]\ar_-{j_!}@/_1.6pc/[l]}$$
Let $(\X_1,\Y_1)$ and $(\X_2,\Y_2)$ be $t$-structures of $\mathscr{T}_1$ and $\mathscr{T}_2$, respectively, and let $(\X,\Y)$ be the glued $t$-structure in $\T$. The recollement of triangulated categories induces a recollement of hearts (see \cite[Theorem 2.8]{PV}). Denote the hearts of the $t$-structures $(\X_1,\Y_1),~(\X_2,\Y_2)$ and $(\X,\Y)$ by $\HH_1, \HH_2$ and $\HH$, respectively. Let $\H_1^{0}, \H_2^{0}$ and $\H^{0}$ be the cohomological functors of the corresponding $t$-structures, and let $\epsilon_1: \HH_1\to \mathscr{T}_1, \epsilon_2: \HH_2\to \mathscr{T}_2$ and $\epsilon: \HH\to \T$ be the canonical embeddings. Then there is a recollement of abelian categories:
$$\xymatrix{\HH_1\ar^-{I_*=I_!}[r]&\HH\ar^-{J^!=J^*}[r]
\ar^-{I^!}@/^1.2pc/[l]\ar_-{I^*}@/_1.6pc/[l]
&\HH_2\ar^-{J_*}@/^1.2pc/[l]\ar_-{J_!}@/_1.6pc/[l]}$$
where the functors are defined as follows:
$$I^*=\H_1^0\circ i^{*}\circ \epsilon,~J_!=\H^0\circ j_{!}\circ \epsilon_2,$$
$$I_*=\H^0\circ i_{*}\circ \epsilon_1,~J^*=\H_2^0\circ j^{*}\circ \epsilon,$$
$$I^!=\H_1^0\circ i^{!}\circ \epsilon,~J_*=\H^0\circ j_{*}\circ \epsilon_2.$$

\subsection{HRS-tilts and recollements}
Now, let $(\mathcal{U}_{1}, \mathcal{V}_{1})$ and $(\mathcal{U}_{2}, \mathcal{V}_{2})$ be torsion pairs in $\HH_1$ and $\HH_2$, respectively, and let $(\mathcal{U}, \mathcal{V})$ be the glued torsion pair in $\HH$. Let $(\X_1^t,\Y_1^t)$ and $(\X_2^t,\X_2^t)$ be the right HRS-tilts of $(\X_1,\Y_1)$ and $(\X_2,\Y_2)$ at $(\mathcal{U}_{1}, \mathcal{V}_{1})$ and $(\mathcal{U}_{2}, \mathcal{V}_{2})$, respectively. 
\begin{Lem}\label{lem:HRS reco}\textnormal{(\cite[Theorem 6.4, Proposition 6.5]{LVY})}
   The right HRS-tilt $(\X^t, \Y^t)$ of the glued $t$-structure $(\X,\Y)$ from $(\X_1,\Y_1)$ and $(\X_2,\Y_2)$ at $(\mathcal{U}, \mathcal{V})$ coincides with the glued $t$-structure from $(\X_1^t,\Y_1^t)$ and $(\X_2^t,\X_2^t)$. 
\end{Lem}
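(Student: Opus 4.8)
The strategy I would follow is to compare the two $t$-structures through the torsion pairs they induce on the common heart $\HH$, using the Happel--Reiten--Smal\o{} correspondence between intermediate $t$-structures and torsion pairs in the heart (\cite{HRS}, \cite[Lemma 1.1.2]{P}, \cite[Proposition 2.3]{ALSV22}). Writing $\H^{i}(-):=\H^{0}((-)[i])$ for the cohomological functors of $(\X,\Y)$, and $\H_1^{i},\H_2^{i}$ for those of $(\X_1,\Y_1),(\X_2,\Y_2)$, one first unwinds the definition $(\X^t,\Y^t)=(\X*\mathcal{U}[-1],\mathcal{V}*\Y[-1])$ into the cohomological descriptions
$$\X^t=\{W\mid \H^{i}(W)=0\ (i\geq 2),\ \H^{1}(W)\in\mathcal{U}\},\qquad \Y^t=\{W\mid \H^{i}(W)=0\ (i\leq -1),\ \H^{0}(W)\in\mathcal{V}\},$$
and likewise in $\mathscr{T}_1,\mathscr{T}_2$. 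In particular $\Y_k[-1]\subseteq\Y_k^t\subseteq\Y_k$ for $k=1,2$, and $(\X^t,\Y^t)$ induces on $\HH$ exactly the torsion pair $(\HH\cap\X^t[1],\HH\cap\Y^t)=(\mathcal{U},\mathcal{V})$.

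Let $(\widetilde{\X},\widetilde{\Y})$ denote the glued $t$-structure from $(\X_1^t,\Y_1^t)$ and $(\X_2^t,\Y_2^t)$; that it is a $t$-structure is the BBD gluing theorem applied to the two tilted $t$-structures. I would next check that it is intermediate between $(\X,\Y)$ and $(\X[-1],\Y[-1])$: using $\Y_k[-1]\subseteq\Y_k^t\subseteq\Y_k$ together with $i^{!}(T)[1]=i^{!}(T[1])$, $j^{*}(T)[1]=j^{*}(T[1])$ and the defining conditions of the glued coaisles, one gets $\Y[-1]\subseteq\widetilde{\Y}\subseteq\Y$.

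The decisive step is to identify the torsion pair that $(\widetilde{\X},\widetilde{\Y})$ induces on $\HH$ with $(\mathcal{U},\mathcal{V})$; since a torsion pair is determined by its torsion-free class, it suffices to show $\HH\cap\widetilde{\Y}=\mathcal{V}$. Here I would invoke the $t$-exactness properties of the glued recollement: $j^{*}=j^{!}$ is $t$-exact, $i^{!}$ is left $t$-exact and $i^{*}$ is right $t$-exact. Hence for $B\in\HH$ one has $j^{*}(B)\in\HH_2$ with $J^*(B)=j^{*}(B)$, and $i^{!}(B)\in\Y_1$ with $\H_1^{0}(i^{!}B)=I^!(B)$ directly from $I^!=\H_1^{0}\circ i^{!}\circ\epsilon$. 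Feeding these into the cohomological descriptions above, the vanishing $\H_1^{i}(i^{!}B)=0$ for $i\leq -1$ is automatic, so $i^{!}(B)\in\Y_1^t\iff I^!(B)\in\mathcal{V}_1$, and $j^{*}(B)\in\Y_2^t\iff J^*(B)\in\mathcal{V}_2$. Therefore
$$\HH\cap\widetilde{\Y}=\{B\in\HH\mid I^!(B)\in\mathcal{V}_1,\ J^*(B)\in\mathcal{V}_2\}=\mathcal{V}$$
by the definition of the glued torsion pair. Thus $(\widetilde{\X},\widetilde{\Y})$ and $(\X^t,\Y^t)$ are two intermediate $t$-structures inducing the torsion pair $(\mathcal{U},\mathcal{V})$ on $\HH$, so the correspondence gives $\widetilde{\Y}=\Y^t$ and hence $(\widetilde{\X},\widetilde{\Y})=(\X^t,\Y^t)$; the analogous computation with $i^{*}$ yields $\HH\cap\widetilde{\X}[1]=\mathcal{U}$ as a consistency check.

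I expect the main obstacle to be the degree bookkeeping in the third step: one must keep the one-sided $t$-exactness of $i^{*}$ and $i^{!}$ aligned with the correct cohomological degrees so that membership in the intermediate coaisles $\Y_1^t,\Y_2^t$ reduces exactly to the heart-level conditions defining $\mathcal{V}$. Once the identifications $\H_1^{0}(i^{!}B)=I^!(B)$ and $\H_2^{0}(j^{*}B)=J^*(B)=j^{*}(B)$ on heart objects are in place and the glued $t$-structure is confirmed to be intermediate, matching the torsion pairs and invoking the correspondence is routine.
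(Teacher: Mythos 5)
Your argument is correct, but note that the paper does not prove this lemma at all: it is quoted verbatim from \cite[Theorem 6.4, Proposition 6.5]{LVY}, where the compatibility of HRS-tilting and gluing is established by a direct verification inside the three triangulated categories -- one checks, via truncation triangles and the (one-sided) $t$-exactness of $i^{*}$, $i^{!}$, $j^{*}$, that the aisle/coaisle glued from the tilted $t$-structures satisfies the defining $*$-decomposition of the tilt of the glued $t$-structure. Your route is genuinely different and more economical: you invoke the HRS bijection between torsion pairs in $\HH$ and intermediate $t$-structures (\cite{HRS}, \cite[Lemma 1.1.2]{P}, \cite[Proposition 2.3]{ALSV22}), observe that both $(\X^t,\Y^t)$ and the glued-of-tilts $(\widetilde{\X},\widetilde{\Y})$ are intermediate for $(\X,\Y)$, and then reduce everything to the single heart-level identity $\HH\cap\widetilde{\Y}=\mathcal{V}$, which follows from $t$-exactness of $j^{*}$, left $t$-exactness of $i^{!}$, and the definition of the glued torsion pair of \cite[Theorem 1]{MH}. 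What the direct approach of \cite{LVY} buys is an explicit description of the decomposition triangles in the tilted glued $t$-structure; what yours buys is that only the torsion-free classes ever need to be compared, with injectivity of the HRS correspondence doing the rest.

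One point you should repair before this counts as a complete proof at the stated level of generality. The blanket cohomological descriptions you start from, e.g. $\Y^t=\{W\mid \H^{i}(W)=0\ (i\leq -1),\ \H^{0}(W)\in\mathcal{V}\}$, are only valid in the ``$\supseteq$'' direction when the $t$-structure is nondegenerate: for a degenerate $(\X,\Y)$, vanishing of $\H^{i}(W)$ for all $i\leq -1$ does not force $W\in\Y$, so these sets can be strictly larger than the tilted coaisles. The lemma, as stated, concerns arbitrary glued $t$-structures. Fortunately your proof only ever applies these descriptions to objects already known to lie in $\Y_1$ (namely $i^{!}B$, by left $t$-exactness) or in $\HH_2$ (namely $j^{*}B$), and there the needed equivalences hold unconditionally from the $*$-description: for $W\in\Y_1$, the truncation triangle $\H_1^{0}(W)\to W\to \tau^{\geq 1}W\to \H_1^{0}(W)[1]$ shows $W\in\mathcal{V}_1*\Y_1[-1]$ if and only if $\H_1^{0}(W)\in\mathcal{V}_1$, and similarly $\HH_2\cap\Y_2^t=\mathcal{V}_2$. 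Phrase those two equivalences this way (rather than through the global cohomological characterizations) and the argument is complete; in the paper's actual applications all $t$-structures involved are cosilting, hence nondegenerate, so the distinction is invisible there in any case.
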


We visualize the lemma above in the following ``commutative" diagram:
$$\xymatrix{
(\X_1, \Y_1)\ar[rr]^{\text{glue}}\ar[dd]|-{(\mathcal{U}_1, \mathcal{V}_1)}
&
&(\X, \Y)\ar[dd]|-{(\mathcal{U}, \mathcal{V})}
&
&(\X_2, \Y_2)\ar[ll]_{\text{glue}}\ar[dd]|-{(\mathcal{U}_2, \mathcal{V}_2)}
\\
 \quad\quad \quad\ar[rr]^{{\text{glue}}}& &\quad\quad & &\quad\quad\quad\ar[ll]_{{\text{glue}}}
\\
(\X_1^t, \Y_1^t)\ar[rr]^{\text{glue}}
&
&(\X^t, \Y^t)
&
&(\X_2^t, \Y_2^t)\ar[ll]_{\text{glue}}
}$$

Let $\T$ be a triangulated category and $(\X,\Y)$ a $t$-structure with heart $\HH$. A torsion pair $(\mathcal{U},\mathcal{V})$ in $\HH$ is said to be a {\em cosilting torsion pair} (\cite{ALSV22}) if the right HRS-tilt of $(\X,\Y)$ at $(\mathcal{U},\mathcal{V})$ is $(^{\perp_{<0}}C, {^{\perp_{>0}}C})$ for some cosilting object $C$ in $\T$.

\begin{Koro}\label{cor:glue cosilting torsion pair}
{\color{black}Let the diagram
$$\xymatrix{\mathscr{T}_1\ar^-{i_*=i_!}[r]
&\mathscr{T}\ar^-{j^!=j^*}[r]\ar^-{i^!}@/^1.2pc/[l]\ar_-{i^*}@/_1.6pc/[l]
&\mathscr{T}_2.\ar^-{j_*}@/^1.2pc/[l]\ar_-{j_!}@/_1.6pc/[l]}$$
be a recollement of triangulated categories. Let $C_1$ and $C_2$ be cosilting objects of $\T_1$ and $\T_2$ with corresponding $t$-structures $(\X_1, \Y_1)$ and $(\X_2, \Y_2)$, respectively. Suppose that $(\mathcal{U}_1,\mathcal{V}_1)$ and $(\mathcal{U}_2,\mathcal{V}_2)$ are cosilting torsion pairs with respect to the $t$-structures $(\X_1, \Y_1)$ and $(\X_2, \Y_2)$ with associated cosilting objects $C'_1$ and $C'_2$, respectively. If $(C_1, C_2)$ and $(C'_1, C'_2)$ are gluable pairs, then the glued torsion pair $(\mathcal{U},\mathcal{V})$ of $(\mathcal{U}_1,\mathcal{V}_1)$ and $(\mathcal{U}_2,\mathcal{V}_2)$ is a cosilting torsion pair.}
\end{Koro}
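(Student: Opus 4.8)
The plan is to reduce the statement entirely to the combination of Theorem~\ref{thm:glue cosilting} and Lemma~\ref{lem:HRS reco}, so that no genuinely new computation is required; the whole argument amounts to matching up the various glued structures in the commutative square visualized after Lemma~\ref{lem:HRS reco}. First I would record that, since $(C_1,C_2)$ is a gluable pair, Theorem~\ref{thm:glue cosilting} produces a cosilting object $C=j_*(C_2)\oplus U$ in $\T$ whose associated $t$-structure $(^{\perp_{<0}}C,{^{\perp_{>0}}C})$ is exactly the glued $t$-structure $(\X,\Y)$ from $(\X_1,\Y_1)$ and $(\X_2,\Y_2)$. In particular the ambient $t$-structure at which we form the HRS-tilt is itself a glued $t$-structure, so its heart $\HH$ sits in the induced recollement of hearts and carries the glued torsion pair $(\mathcal{U},\mathcal{V})$ of $(\mathcal{U}_1,\mathcal{V}_1)$ and $(\mathcal{U}_2,\mathcal{V}_2)$ precisely in the form demanded by the hypothesis of Lemma~\ref{lem:HRS reco}.

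Next I would unwind the definition of the cosilting torsion pairs on the two outer sides. By definition, $(\mathcal{U}_i,\mathcal{V}_i)$ being a cosilting torsion pair with respect to $(\X_i,\Y_i)$ with associated cosilting object $C'_i$ says exactly that the right HRS-tilt $(\X_i^t,\Y_i^t)$ of $(\X_i,\Y_i)$ at $(\mathcal{U}_i,\mathcal{V}_i)$ equals $(^{\perp_{<0}}C'_i,{^{\perp_{>0}}C'_i})$, the $t$-structure associated to $C'_i$, for $i=1,2$. I would then invoke Lemma~\ref{lem:HRS reco}: the right HRS-tilt $(\X^t,\Y^t)$ of the glued $t$-structure $(\X,\Y)$ at the glued torsion pair $(\mathcal{U},\mathcal{V})$ coincides with the glued $t$-structure from $(\X_1^t,\Y_1^t)$ and $(\X_2^t,\Y_2^t)$. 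Substituting the previous identification, $(\X^t,\Y^t)$ is the glued $t$-structure from the $t$-structures associated to $C'_1$ and $C'_2$.

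Finally, since $(C'_1,C'_2)$ is a gluable pair by hypothesis, a second application of Theorem~\ref{thm:glue cosilting}, now to the pair $(C'_1,C'_2)$, identifies this glued $t$-structure with $(^{\perp_{<0}}C',{^{\perp_{>0}}C'})$, the $t$-structure associated to the cosilting object $C'=j_*(C'_2)\oplus U'$. Hence the right HRS-tilt of $(\X,\Y)$ at $(\mathcal{U},\mathcal{V})$ is the $t$-structure of a cosilting object, which is exactly the assertion that $(\mathcal{U},\mathcal{V})$ is a cosilting torsion pair, completing the argument.

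The step I expect to demand the most care is not any single deduction but the bookkeeping that makes the commutative square genuinely commute: one must check that the torsion pair $(\mathcal{U},\mathcal{V})$ occurring in Lemma~\ref{lem:HRS reco} (defined through the induced recollement of hearts) is literally the glued torsion pair named in the statement, and that the left/right indexing of the outer categories in Theorem~\ref{thm:glue cosilting} is compatible with that of Lemma~\ref{lem:HRS reco}, since the former lists the gluing data in the order $(C_2,C_1)$ while the latter indexes by $\mathscr{T}_1$ and $\mathscr{T}_2$; gluing along a recollement yields a unique glued $t$-structure from a given pair, so this ordering discrepancy is harmless once noted. Once this compatibility is verified, the conclusion is an immediate diagram chase around the square, with Theorem~\ref{thm:glue cosilting} supplying the horizontal (gluing) identifications of the top and bottom rows and Lemma~\ref{lem:HRS reco} supplying the vertical (HRS-tilt) ones.
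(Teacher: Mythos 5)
Your proposal is correct and follows exactly the paper's own argument: the paper's proof is the one-line remark that the corollary ``follows directly from Lemma~\ref{lem:HRS reco} and Theorem~\ref{thm:glue cosilting}'', and your write-up is precisely the unpacking of that -- Theorem~\ref{thm:glue cosilting} applied to $(C_1,C_2)$ and to $(C'_1,C'_2)$, with Lemma~\ref{lem:HRS reco} identifying the right HRS-tilt of the glued $t$-structure at $(\mathcal{U},\mathcal{V})$ with the glued $t$-structure of the tilts. No discrepancy with the paper's route; your extra bookkeeping remarks about matching the glued torsion pair and the indexing conventions are sensible but do not change the argument.
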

\begin{proof}
It follows directly from Lemma~\ref{lem:HRS reco} and Theorem~\ref{thm:glue cosilting}.
\end{proof}

\subsection{Mutation of cosilting objects}
A method of mutating cosilting objects is introduced by Angeleri H\"ugel-Laking-Stovick-Vit\'oria in \cite[Section 3]{ALSV22}. Since the definitions of left and right mutations are dual, we will focus only on right mutations. Let $C$ be a cosilting object and $\X$ be a subcategory of $\T$. Denote $\H_C$ as the heart and 
$\H^0_C$ as the cohomological functor of the corresponding $t$-structure of the cosilting object $C$. {\color{black}Furthermore, we define the notation
$$\H^0_C(\X):=\{\H^0_C(X)\mid X\in\X\}$$
and ${\rm Cogen}(\H^0_{C}(\mathcal{E}))$ is the subcategory formed by all subobjects of products of objects in $\H^0_{C}(\mathcal{E})$.}
\begin{Def}\cite[Definition 3.2, Proposition 3.10]{ALSV22}
    Let $\T$ be a triangulated category with products, and let $C$ be a cosilting object. Let $\mathcal{E}=\Prod(\mathcal{E})\subset \Prod(C)$. $C$ admits a right mutation $C'$ with respect to $\mathcal{E}$ if
    such that
    \begin{enumerate}
        \item $C$ admits an $\mathcal{E}$-precover, and 
        \item the torsion pair $(^{\perp_{0}}(\H^0_{C}(\mathcal{E})), {\rm Cogen}(\H^0_{C}(\mathcal{E})))$, {\color{black}cogenerated by $\H^0_{C}(\mathcal{E})$}, is a cosilting torsion pair.
    \end{enumerate}
\end{Def}

In this paper, we only consider the mutation of pure-injective cosilting objects, and in this case, we have the following useful lemma.

\begin{Prop}\label{mutation pair}\textnormal{(\cite[Lemma 4.8, Theorem 4.9]{ALSV22})}
Let  $C$ be a pure-injective cosilting object and  $\mathcal{E}=\Prod(\mathcal{E})\subset \Prod(C)$. Let $(\mathcal{S},\mathcal{R})$ be the torsion pair cogenerated by $\H^0_{C}(\mathcal{E})$. Then $C$ admits a right mutation $C'$ with respect to $\mathcal{E}$ if and only if $(\mathcal{S},\mathcal{R})$ is a cosilting torsion pair. Moreover, in this case, $C'$ is also pure-injective.
\end{Prop}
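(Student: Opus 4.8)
The plan is to read the equivalence straight off the definition of right mutation and to reduce all the real content to two pure-injectivity facts. One direction is free: if $C$ admits a right mutation $C'$ with respect to $\mathcal{E}$, then condition (2) of the definition asserts exactly that the torsion pair cogenerated by $\H^0_{C}(\mathcal{E})$, which is $(\mathcal{S},\mathcal{R})$, is a cosilting torsion pair. So I would put all the effort into the converse: assuming that $(\mathcal{S},\mathcal{R})$ is a cosilting torsion pair (condition (2)), the only thing left to establish is condition (1), namely that $C$ admits an $\mathcal{E}$-precover. Once this is in place, both conditions hold and the right mutation $C'$ exists, with its $t$-structure equal to the right HRS-tilt of $({^{\perp_{<0}}C},{^{\perp_{>0}}C})$ at $(\mathcal{S},\mathcal{R})$.

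For the precover I would transfer the problem into the heart. Since $C$ is cosilting we have $C\in\H_C$ and, more generally, $\Prod(C)\subseteq\H_C$, so $\mathcal{E}\subseteq\H_C$ and $\H^0_{C}(\mathcal{E})=\mathcal{E}$; because $\T(H,H')=\H_C(H,H')$ for objects $H,H'$ of the heart, an $\mathcal{E}$-precover of $C$ in $\T$ is the same datum as an $\mathcal{E}$-precover of $C$ in the abelian category $\H_C$. Now pure-injectivity enters: by Lemma~\ref{lem:definable} the definability of ${^{\perp_{>0}}C}$ makes $\H_C$ a Grothendieck category, and every object of $\mathcal{E}\subseteq\Prod(C)$ is itself pure-injective. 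Since $\mathcal{E}=\Prod(\mathcal{E})$ is closed under products and summands and consists of pure-injective objects, I would argue that it is a precovering class in $\H_C$ and extract the desired $\mathcal{E}$-precover of $C$ from the approximation theory of pure-injective objects. I expect this to be the main obstacle: it is exactly the step where the pure-injective hypotheses are consumed, and it rests on the fine approximation properties of product-closed classes of pure-injectives rather than on any formal manipulation.

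Finally, I would prove that the mutated object $C'$ is again pure-injective using Lemma~\ref{lem:definable} once more, so that it suffices to check that ${^{\perp_{>0}}C'}$ is definable. Since the $t$-structure of $C'$ is the right HRS-tilt at $(\mathcal{S},\mathcal{R})$, its coaisle is
$${^{\perp_{>0}}C'}=\mathcal{R}*({^{\perp_{>0}}C})[-1].$$
Here ${^{\perp_{>0}}C}$ is definable (as $C$ is pure-injective), and $\mathcal{R}=\mathrm{Cogen}(\H^0_{C}(\mathcal{E}))$ is definable, being the torsion-free class of a cosilting torsion pair cogenerated inside the heart by the pure-injective class $\mathcal{E}$. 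The remaining task is to show that the extension class $\mathcal{R}*({^{\perp_{>0}}C})[-1]$ is again definable; I would do this by verifying closure under products, pure subobjects and directed homotopy colimits, each property being inherited from the corresponding property of the two definable pieces together with the compatibility of products and pure triangles with the octahedral formation of extensions. Lemma~\ref{lem:definable} then gives that $C'$ is pure-injective, completing the argument.
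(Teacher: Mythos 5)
The paper never proves this proposition: it is quoted verbatim from \cite[Lemma 4.8, Theorem 4.9]{ALSV22}, so your attempt can only be measured against the content of that reference. Your skeleton does match the shape of the cited argument (the forward implication is definitional; the converse reduces to producing an $\mathcal{E}$-precover of $C$; pure-injectivity of $C'$ is obtained from definability of its coaisle via Lemma~\ref{lem:definable}), but the execution contains a genuine error and leaves both steps where pure-injectivity is actually consumed unproven.

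The error is the claim that $C\in \H_C$, that $\Prod(C)\subseteq \H_C$, and hence that $\H^0_C(\mathcal{E})=\mathcal{E}$. A cosilting object always lies in the coaisle ${}^{\perp_{>0}}C$ of its $t$-structure, but in general \emph{not} in the aisle ${}^{\perp_{<0}}C$; the cosilting objects belonging to their own heart are precisely the cotilting ones. Concretely, over the path algebra $A$ of the quiver $1\to 2$, the complex $C=I_2[-1]\oplus I_1$ is a cosilting complex (and, being a bounded complex of finite-dimensional modules, it is pure-injective), yet $\T(C,C[-1])\supseteq \Hom_A(I_2,I_1)\neq 0$, so $C\notin{}^{\perp_{<0}}C$ and $C\notin\H_C$. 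Consequently your identification of $\mathcal{E}$-precovers of $C$ in $\T$ with $\mathcal{E}$-precovers in the abelian category $\H_C$ collapses. The correct transfer to the heart is not an inclusion but the equivalence $\H^0_C\colon\Prod(C)\to\mathsf{Inj}(\H_C)$ together with the isomorphism $\T(-,X)\simeq \H_C(\H^0_C(-),\H^0_C(X))$ for $X\in\Prod(C)$ (Lemma~\ref{lem:useful}(2) and (3)), which compares $\mathcal{E}$-precovers of $C$ with $\H^0_C(\mathcal{E})$-precovers of $\H^0_C(C)$. Beyond this, the two substantive points are left as black boxes: (i) the existence of the precover is exactly what \cite[Lemma 4.8]{ALSV22} proves, and it requires the (nontrivial) theorem that the heart of a pure-injective cosilting $t$-structure is a Grothendieck category plus an actual construction of the approximation --- ``approximation theory of product-closed classes of pure-injectives'' is not a quotable fact that finishes this step; (ii) the definability of ${}^{\perp_{>0}}C'=\mathcal{R}*({}^{\perp_{>0}}C)[-1]$ cannot be obtained by the generic closure argument you sketch, since an extension class $\mathcal{A}*\mathcal{B}$ of two definable classes need not be definable (it can even fail to be closed under direct summands), and moreover $\mathcal{R}$ is a subcategory of the heart rather than of $\T$, so ``$\mathcal{R}$ is definable'' is not the right kind of hypothesis to feed into such an argument. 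As written, both the converse direction and the ``moreover'' statement remain unproven.
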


The following lemma is used to prove the main theorems.
\begin{Lem}\label{lem:useful}
Let $\mathscr{T}$ be a triangulated category.
\begin{enumerate}
    \item \textnormal{(\cite[Lemma~2.1]{ALSV22})}~Let $(\X, \Y)$ be a $t$-structure in $\mathscr{T}$ with heart $\mathcal{H}$. Denote $\H^0$ the associated cohomological functor of $\mathcal{H}$. 
    If $X\in \mathcal{H}$ and $Y\in \Y$, then $\mathscr{T}(X, Y)\simeq \mathcal{H}(X, \H^0(Y)).$
    \item \textnormal{(\cite[Lemma~2.5]{ALSV22})} Assume $\mathscr{T}$ has products and $C$ is a cosilting object. Then the associated heart $\mathcal{H}$ of $C$ is abelian with enough injective objects, and the functor $\H^0$ induces an equivalence from $\Prod(C)$ to $\mathsf{Inj}(\mathcal{H})$ and a natural isomorphism of functors
    $\mathscr{T}(-, C)\simeq \mathcal{H}(\H^0(-), \H^0(C)).$ 
\item {\color{black}Furthermore, if $X\in \Prod(C)$, then there is a natural isomorphism of functors
    $\mathscr{T}(-, X)\simeq \mathcal{H}(\H^0(-), \H^0(X)).$}
\end{enumerate}
\end{Lem}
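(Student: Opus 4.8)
Parts (1) and (2) are quoted verbatim from \cite{ALSV22}, so the only assertion requiring proof is the ``Furthermore'' clause (3). The plan is to deduce it from (1) and (2) by an argument that first passes to products of copies of $C$ and then descends to arbitrary summands. Throughout write $(\X,\Y)=({}^{\perp_{<0}}C,{}^{\perp_{>0}}C)$ for the cosilting $t$-structure and let $\eta_{A,X}\colon \mathscr{T}(A,X)\to \mathcal{H}(\H^0(A),\H^0(X))$, $f\mapsto \H^0(f)$, be the natural transformation (in both variables) induced by the cohomological functor; by (2) it is an isomorphism for every $A$ when $X=C$. First I would record the elementary observation that $\Prod(C)\subseteq \Y$: indeed $C\in {}^{\perp_{>0}}C=\Y$, and the coaisle $\Y$ is closed under products and direct summands. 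Hence every $X\in\Prod(C)$ lies in $\Y$, which is what lets me feed such objects into part (1).

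The key step is to show that $\H^0$ turns the product $\prod_I C$ taken in $\mathscr{T}$ into the product of the $\H^0(C)$ inside $\mathcal{H}$. For any $G\in\mathcal{H}$ I would compute, using that $\textstyle\prod_I C\in\Y$ and $C\in\Y$ together with part (1) and the fact that $\mathscr{T}(G,-)$ sends products to products,
$$\mathcal{H}(G,\H^0(\textstyle\prod_I C))\simeq \mathscr{T}(G,\textstyle\prod_I C)\simeq \textstyle\prod_I \mathscr{T}(G,C)\simeq \textstyle\prod_I\mathcal{H}(G,\H^0(C)).$$
As this chain is natural in $G\in\mathcal{H}$, the Yoneda lemma in $\mathcal{H}$ identifies $\H^0(\prod_I C)$ with the product $\prod_I\H^0(C)$ in $\mathcal{H}$ (in particular this product exists). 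Feeding this back, for every $A\in\mathscr{T}$ I get
$$\mathscr{T}(A,\textstyle\prod_I C)\simeq \textstyle\prod_I\mathscr{T}(A,C)\simeq \textstyle\prod_I\mathcal{H}(\H^0(A),\H^0(C))\simeq \mathcal{H}(\H^0(A),\H^0(\textstyle\prod_I C)),$$
where the middle isomorphism is part (2); one checks the composite is exactly $\eta_{A,\prod_I C}$, so (3) holds for $X=\prod_I C$.

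Finally I would pass from products to summands. Any $X\in\Prod(C)$ is a retract of some $\prod_I C$, say via $s\colon X\to \prod_I C$ and $r\colon \prod_I C\to X$ with $rs=1_X$. Because $\eta$ is natural in its second variable, $\eta_{A,X}$ is a retract of $\eta_{A,\prod_I C}$ in the category of abelian groups, and a retract of an isomorphism is an isomorphism; hence $\eta_{A,X}$ is an isomorphism for all $A$, naturally in $A$, which is the desired natural isomorphism $\mathscr{T}(-,X)\simeq\mathcal{H}(\H^0(-),\H^0(X))$. The one genuinely substantive point---and the step I expect to be the main obstacle---is the preservation of products by $\H^0$ on $\Prod(C)$; everything else is formal. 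I would stress that this preservation is not a general property of the cohomological functor but holds here precisely because the objects involved lie in the coaisle $\Y$, so that part (1) applies and converts the question into the trivial statement that $\mathscr{T}(G,-)$ and $\mathcal{H}(G,-)$ preserve products.
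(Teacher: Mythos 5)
Your proof is correct and rests on the same pivot as the paper's: the fact that $\H^0$, restricted to the coaisle $\Y$, preserves products. The paper obtains this by citing \cite[Lemma 3.1(2)]{PS15}, which says that $\H^0|_{\Y}$ admits the embedding $\mathcal{H}\hookrightarrow\Y$ as a left adjoint; your Yoneda computation from part (1) is precisely a hands-on proof of the needed consequence of that adjunction (part (1) \emph{is} the adjunction isomorphism on Hom-sets), so your argument is self-contained where the paper's is a citation. You also spell out the two steps the paper compresses into ``this implies the statement'': the identification $\H^0(\prod_I C)\simeq\prod_I\H^0(C)$ combined with part (2) to handle $X=\prod_I C$, and the descent to arbitrary direct summands via the retract argument; both are carried out correctly.
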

{\color{black}
\begin{proof}
We prove statement (3). Denote by $(\X,\Y)=(^{\perp_{<0}}C, {^{\perp_{>0}}C})$ the associated $t$-structure of $C$, according to \cite[Lemma 3.1(2)]{PS15}, the functor $\H^0:\Y\to \mathcal{H}$ admits a left adjoint, which is precisely the embedding functor $j:\mathcal{H}\to \Y$, thus $\H^0$ respects products in $\Y$. Since $\Prod(C)$ is a subcategory of $\Y$, it follows that $\H^0$ also respects products in $\Prod(C)$. This implies the statement.
\end{proof}}

\subsection{Compatibility of mutation and gluing}
{\color{black}Let us fix the setting and notation for this subsection.

{\bf SETTING:}\quad 
Let $\T_1,\T_2$ and $\T_3$ be algebraically compactly generated triangulated categories. that form a ladder of height 2:
$$\xymatrix{
\mathscr{T}_1
\ar@<-2.25ex>[rr]|-{i_{\#}}
\ar@<0.75ex>[rr]|-{i_{*}}
&&
\ar@<-2.25ex>[ll]|-{i^{*}}
\ar@<0.75ex>[ll]|-{i^{!}}
\T
\ar@<-2.25ex>[rr]|-{j^{\#}}
\ar@<0.75ex>[rr]|-{j^{*}}
&&
\ar@<-2.25ex>[ll]|-{j_{!}}
\ar@<0.75ex>[ll]|-{j_{*}}
\mathscr{T}_2}.$$
Let $C_1$ and $C_2$ be pure-injective cosilting objects in $\T_1$ and $\T_2$, respectively. It follows from Theorem~\ref{thm:glue pure-inj cosilting} that $(C_1, C_2)$ is a gluable pair along the upper recollement. Denote the triangle (see Definition \ref{def:gluable cosilting}) by
$$V\longrightarrow i_{*}(C_1)\longrightarrow U\longrightarrow V[1]~\text{with}~V\in j_{*}(^{\perp_{\geq 0}}C_2), U\in (j_{*}(^{\perp_{\geq 0}}C_2))^{\perp}.$$
We denote the glued cosilting object by $C=j_{*}(C_2)\oplus U$.

{\bf NOTATION:} 
\begin{align*}
\mathcal{H}&:={ }^{\perp_{<0}}C\cap { }^{\perp_{>0}}C\\
\mathcal{H}_i&:={ }^{\perp_{<0}}C_i\cap { }^{\perp_{>0}}C_i,~i=1,2 \\
\H^0&: \T\to \mathcal{H}\\
\H^0_i&: \T_i\to \mathcal{H}_i,~i=1,2 
\end{align*}}

\subsubsection{Mutation at left side}

In this subsection, we will prove the following main result.

\begin{Theo}\label{thm:muta at left}
Let $X$ be a direct summand of $C_1$, and let $\mathcal{E}_1=\Prod(X)\subset \Prod(C_1)$. Assume $C_1$ admits a right mutation $C'_1$ with respect to $\mathcal{E}_1$. Denote by $C'$ the glued cosilting object of $C'_1$ and $C_2$. Then $C'$ is a right mutation of $C$.
\end{Theo}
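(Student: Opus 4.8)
The plan is to move the whole problem into the recollement of hearts $\HH_1\rightleftarrows\HH\rightleftarrows\HH_2$ induced by the glued $t$-structure (Subsection~5.1), to recognise $C'$ as the cosilting object attached to a \emph{glued} cosilting torsion pair, and finally to exhibit that torsion pair as a mutation datum of $C$. Throughout I use the equivalence $\H^0\colon\Prod(C)\xrightarrow{\ \sim\ }\mathsf{Inj}(\HH)$ of Lemma~\ref{lem:useful}(2), together with the fact (Lemma~\ref{lem:useful}(3)) that it respects products. Since $C$ is pure-injective, the heart $\HH$ is a Grothendieck category (see \cite{PV19,AMV17}); in particular every object of $\HH$ has an injective envelope, and this is the only analytic input the argument needs.

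First I assemble the two torsion pairs to be glued. On the $\T_1$-side let $(\mathcal{S}_1,\mathcal{R}_1)$ be the torsion pair in $\HH_1$ cogenerated by $\H^0_1(\mathcal{E}_1)$. As $C_1$ is pure-injective and admits a right mutation $C'_1$ with respect to $\mathcal{E}_1$, Proposition~\ref{mutation pair} shows that $(\mathcal{S}_1,\mathcal{R}_1)$ is a cosilting torsion pair whose associated cosilting object is the pure-injective object $C'_1$. On the $\T_2$-side I take the trivial torsion pair $(0,\HH_2)$: its right HRS-tilt is $(\X_2,\Y_2)$ itself, because $\X_2*0[-1]=\X_2$ and $\HH_2*\Y_2[-1]=\Y_2$, so it is a cosilting torsion pair with associated object $C_2$.

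Now I glue. Both $C'_1$ and $C_2$ are pure-injective, so Theorem~\ref{thm:glue pure-inj cosilting} makes $(C'_1,C_2)$ a gluable pair whose glued cosilting object is exactly $C'$; and $(C_1,C_2)$ is gluable by hypothesis. Hence Corollary~\ref{cor:glue cosilting torsion pair} applies: the glued torsion pair
$$(\mathcal{S},\mathcal{R})=\bigl(\{B\in\HH\mid I^*(B)\in\mathcal{S}_1,\ J^*(B)=0\},\ \{B\in\HH\mid I^!(B)\in\mathcal{R}_1\}\bigr)$$
is a cosilting torsion pair. Moreover, by Lemma~\ref{lem:HRS reco} its right HRS-tilt is the $t$-structure glued from the $t$-structures of $C'_1$ and $C_2$, which by Theorem~\ref{thm:glue cosilting} is the $t$-structure of $C'$. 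Thus $C'$ is the cosilting object associated with $(\mathcal{S},\mathcal{R})$.

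It remains to realise $(\mathcal{S},\mathcal{R})$ as a mutation datum of $C$, which is the crux of the proof. I put $\mathcal{E}:=(\H^0)^{-1}\bigl(\mathcal{R}\cap\mathsf{Inj}(\HH)\bigr)\subseteq\Prod(C)$; since $\mathcal{R}\cap\mathsf{Inj}(\HH)$ is closed under products and summands and $\H^0$ respects products, $\mathcal{E}=\Prod(\mathcal{E})$. I then claim ${\rm Cogen}(\H^0(\mathcal{E}))=\mathcal{R}$. The inclusion ``$\subseteq$'' is immediate, since $\mathcal{R}$ is closed under subobjects and products. For ``$\supseteq$'', given $R\in\mathcal{R}$ I embed it into its injective envelope $E(R)$; as $R$ is essential in $E(R)$ and $\mathcal{R}$ is a torsion-free class, the torsion part of $E(R)$ intersects $R$ trivially and hence vanishes, so $E(R)\in\mathcal{R}\cap\mathsf{Inj}(\HH)=\H^0(\mathcal{E})$ and $R\hookrightarrow E(R)$. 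Therefore $(\mathcal{S},\mathcal{R})$ is precisely the torsion pair cogenerated by $\H^0(\mathcal{E})$, and since it is cosilting and $C$ is pure-injective, Proposition~\ref{mutation pair} yields that $C$ admits a right mutation with respect to $\mathcal{E}$, necessarily the cosilting object of $(\mathcal{S},\mathcal{R})$, namely $C'$. The delicate point is exactly this last step: one must ensure that the torsion-free class of the glued cosilting torsion pair is cogenerated by injectives lying in $\Prod(C)$. Here the Grothendieck property of $\HH$ (for injective envelopes) and the exactness of $I_*$ are used; the latter makes $I^!$ preserve injectives, so that $\mathcal{R}\cap\mathsf{Inj}(\HH)=\{Q\in\mathsf{Inj}(\HH)\mid I^!(Q)\in\Prod(\H^0_1(X))\}$, confirming that $\mathcal{E}$ really encodes ``keep $j_*(C_2)$ and mutate only the summand of $C_1$ determined by $X$.''
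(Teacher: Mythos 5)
Your first half is sound and matches the paper's strategy: recognising $(\mathcal{S}_1,\mathcal{R}_1)$ and $(0,\HH_2)$ as cosilting torsion pairs, gluing them via Corollary~\ref{cor:glue cosilting torsion pair} and Lemma~\ref{lem:HRS reco}, and identifying $C'$ as the cosilting object attached to the glued pair $(\mathcal{S},\mathcal{R})$. The gap is exactly at the step you yourself call the crux. To get $\mathcal{R}=\mathsf{Cogen}(\H^0(\mathcal{E}))$ with $\mathcal{E}=(\H^0)^{-1}(\mathcal{R}\cap\mathsf{Inj}(\HH))$, you claim that for $R\in\mathcal{R}$ ``the torsion part of $E(R)$ intersects $R$ trivially and hence vanishes.'' This is false for a general torsion pair: writing $t(E(R))$ for the torsion part, the intersection $t(E(R))\cap R$ is a subobject of the torsion object $t(E(R))$, but torsion classes are closed under quotients and extensions, \emph{not} under subobjects, so this intersection need not lie in $\mathcal{S}$; it lies in $\mathcal{R}$ (being a subobject of $R$), and nothing forces it to be zero. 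Closure of a torsion-free class under injective envelopes is precisely \emph{hereditariness} of the torsion pair, so your argument assumes the property it is supposed to establish. A concrete failure: for the path algebra of the $A_2$ quiver, take the torsion pair whose torsion class is additively generated by the projective-injective module $M$ together with its top; the socle $S$ of $M$ is torsion-free, yet $E(S)=M$ is torsion.

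The statement you need is true here, but only because of where $(\mathcal{S},\mathcal{R})$ comes from, and that must be used. One repair stays on your route: $(\mathcal{S}_1,\mathcal{R}_1)$ is cogenerated by the injectives $\H^0_1(\mathcal{E}_1)$, hence is hereditary (embed $E(R_1)$ into an injective product cogenerating $R_1$, using that $R_1$ is essential in $E(R_1)$); then show the glued pair is hereditary, using that $J^*$ is exact, that $\mathsf{Ker}(J^*)=\mathsf{Im}(I_*)$, and that $I_*$ is exact and fully faithful, so any subobject of an object $B\in\mathcal{S}$ is of the form $I_*(A')$ for a subobject $A'\subseteq I^*(B)\in\mathcal{S}_1$; hereditariness then gives $E(R)\in\mathcal{R}$ and your argument closes. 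The paper avoids this entirely by going in the opposite direction: it \emph{starts} from the explicit class $\mathcal{E}=\Prod(j_*(C_2)\oplus U_X)$, where $U_X$ is the component of $U$ obtained by splitting the triangle $(*)$ along $C_1\simeq X\oplus Y$, so that $\mathcal{E}\subseteq\Prod(C)$ and cogeneration by injectives hold by construction, and then proves by direct Hom computations (Lemma~\ref{lem:for glue torsion pair} and Lemma~\ref{lem:main gluable pair}) that the torsion pair cogenerated by $\H^0(\mathcal{E})$ \emph{is} the glued pair. Either repair works; as written, however, your proof is incomplete at its decisive point.
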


To prove the theorem, let us begin with some lemmas.

\begin{Lem}\label{lem:for glue torsion pair}
 Let $X\in \mathcal{H}$. Then
 \begin{enumerate}
     \item $\T(X, j_*j^*(U)[-1])=0$. If $\T(X, U)=0$, then $\T(X, i_*i^!(U))=0$;
     \item If $\T(X, j_*(C_2))=0$, then $\T(X, j_*j^*(U))=0$;
     \item If $\T(X, j_*(C_2))=0$, then for $U_0\in \Prod(U)$,
     $$\T(X, i_*i^!(U_0))=0~\text{if and only if}~\T(X, U_0)=0.$$
 \end{enumerate}
\end{Lem}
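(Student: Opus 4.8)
The plan rests on two preliminary reductions. First, by Proposition~\ref{rem:from condition tri}(3) the defining triangle $(*)$ is isomorphic to the canonical triangle
\[
j_*j^*(U)[-1]\longrightarrow i_*i^!(U)\longrightarrow U\longrightarrow j_*j^*(U),
\]
so in particular $V\simeq j_*j^*(U)[-1]$; I will use this triangle (and its analogue for any $U_0\in\Prod(U)$) as the main computational device. Second, I would record the key observation that for $X\in\mathcal{H}$ one has $\T(X,j_*(C_2))=0$ if and only if $j^*(X)=0$. To see this, the adjunction $(j^*,j_*)$ gives $\T(X,j_*(C_2))\simeq\T_2(j^*(X),C_2)$; since $({}^{\perp_{<0}}C,{}^{\perp_{>0}}C)$ is the glued $t$-structure (Theorem~\ref{thm:glue cosilting}), the functor $j^*$ is $t$-exact and carries the heart $\mathcal{H}$ into $\mathcal{H}_2$, so $j^*(X)\in\mathcal{H}_2$; then the natural isomorphism of Lemma~\ref{lem:useful}(2), $\T_2(-,C_2)\simeq\mathcal{H}_2(\H^0_2(-),\H^0_2(C_2))$, together with $\H^0_2(j^*(X))=j^*(X)$ and the fact that $\H^0_2(C_2)$ is an injective cogenerator of $\mathcal{H}_2$, shows this Hom vanishes exactly when $j^*(X)=0$.

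For part (1) I would argue purely $t$-structure-theoretically. By Proposition~\ref{rem:from condition tri}(2) we have $V[1]\in{}^{\perp_{>0}}C$, hence $j_*j^*(U)[-1]\simeq V\in({}^{\perp_{>0}}C)[-1]$; since $X\in\mathcal{H}\subseteq{}^{\perp_{<0}}C$ and $({}^{\perp_{<0}}C,({}^{\perp_{>0}}C)[-1])$ is the torsion pair underlying the cosilting $t$-structure of $C$, orthogonality forces $\T(X,j_*j^*(U)[-1])=0$. For the second assertion I would apply $\T(X,-)$ to the canonical triangle above, obtaining the exact sequence $\T(X,j_*j^*(U)[-1])\to\T(X,i_*i^!(U))\to\T(X,U)$; the left term vanishes by what was just shown and the right term by hypothesis, so the middle term vanishes.

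Parts (2) and (3) then follow quickly from the key observation, since under their common hypothesis $\T(X,j_*(C_2))=0$ we obtain $j^*(X)=0$. For part (2), $\T(X,j_*j^*(U))\simeq\T_2(j^*(X),j^*(U))=0$. For part (3), I would apply $\T(X,-)$ to the canonical triangle of $U_0\in\Prod(U)$, namely $j_*j^*(U_0)[-1]\to i_*i^!(U_0)\to U_0\to j_*j^*(U_0)$, to get the exact sequence $\T(X,j_*j^*(U_0)[-1])\to\T(X,i_*i^!(U_0))\to\T(X,U_0)\to\T(X,j_*j^*(U_0))$; both flanking terms are of the form $\T(X,j_*(-))\simeq\T_2(j^*(X),-)=0$ (using that $j_*$ commutes with shifts), so the middle map is an isomorphism $\T(X,i_*i^!(U_0))\simeq\T(X,U_0)$, which gives the desired equivalence.

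The only genuinely delicate point — the rest being formal diagram chases with the recollement triangles — is the key observation of the first paragraph, and more precisely its two ingredients: that $j^*$ is $t$-exact for the glued $t$-structure, so that it carries the glued heart $\mathcal{H}$ into $\mathcal{H}_2$, and that the cosilting object $C_2$ represents, via $\H^0_2$, an injective cogenerator of $\mathcal{H}_2$. I expect verifying these to be the main obstacle; once they are secured, all three parts reduce to short exact-sequence arguments.
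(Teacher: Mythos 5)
Your proposal is correct, and while your parts (1) and (3) follow essentially the paper's own argument (orthogonality for the glued $t$-structure plus the canonical triangle $i_*i^!(U_0)\to U_0\to j_*j^*(U_0)\to i_*i^!(U_0)[1]$), your treatment of part (2) is genuinely different from, and considerably shorter than, the paper's. The paper proves (2) by writing $V[1]\simeq j_*(\bar V)$ with $\bar V\in{}^{\perp_{>0}}C_2$, taking a $\Prod(C_2)$-preenvelope $\bar V\to E_2$, checking that its cone stays in ${}^{\perp_{>0}}C_2$ so that $\H^0(j_*(\bar V))$ embeds into $\H^0(j_*(E_2))$, and then chaining Hom-vanishing statements through Lemma~\ref{lem:useful}; this route never needs a cogenerator property. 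You instead reduce the hypothesis $\T(X,j_*(C_2))=0$ to the single statement $j^*(X)=0$, after which every group of the form $\T(X,j_*(-))$ dies by adjunction; in particular this makes statement (3) for an arbitrary $U_0\in\Prod(U)$ immediate, whereas the paper's written proof of (3) literally treats only $U$ itself and needs the extra (easy) remark that $j^*$ and $j_*$ preserve products in order to cover all of $\Prod(U)$. Your two ``delicate'' ingredients are both secure: the $t$-exactness of $j^*$ is immediate from the description of the glued $t$-structure together with Theorem~\ref{thm:glue cosilting} (the paper itself uses $j^*(X)\in\mathcal{H}_2$ in its proof of (1)), and the cogenerator property of $\H^0_2(C_2)$ does hold, by combining enough injectives and the equivalence $\Prod(C_2)\simeq\mathsf{Inj}(\mathcal{H}_2)$ from Lemma~\ref{lem:useful}(2) with the fact, used in the proof of Lemma~\ref{lem:useful}(3), that $\H^0_2$ preserves products. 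In fact you can bypass the cogenerator claim entirely: since $j^*(X)\in\mathcal{H}_2={}^{\perp_{<0}}C_2\cap{}^{\perp_{>0}}C_2$, the groups $\T_2(j^*(X),C_2[i])$ vanish automatically for all $i\neq 0$, so the hypothesis $\T_2(j^*(X),C_2)=0$ gives vanishing for all $i\in\mathbb{Z}$, and the generating property of cosilting objects recorded in Section~\ref{sec:preliminaries} then forces $j^*(X)=0$.
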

\begin{proof}
$(1)$ Since $j^*(X)\in \X_2\cap\Y_2=\mathcal{H}_2$ and $j^*(U)\simeq j^*(V[1])\in \Y_2$, then $\T_2(j^*(X), j^*(U)[-1])=0$.

Note that $U$ has a canonical triangle
$$i_*i^!(U)\longrightarrow U\longrightarrow j_*j^*(U)\longrightarrow i_*i^!(U)[1].$$
Applying $\T(X, -)$ to this triangle gives us an exact sequence:
$$\T(X, j_*j^*(U)[-1])\longrightarrow \T(X, i_*i^!(U))\longrightarrow \T(X, U)\longrightarrow \T(X, j_*j^*(U)).$$
Hence, provided $\T(X, U)=0$, it follows that $\T(X, i_*i^!(U))=0$.

$(2)$ 
In the triangle $(*)$ (see Definition~\ref{def:gluable cosilting}), we set $V[1]\simeq j_*(\Bar{V})$ for some $\Bar{V}\in {^{\perp_{>0}}C_2}$. Let $\phi:\Bar{V}\to E_2$ be a $\Prod(C_2)$-preenvelope of $\Bar{V}$, and extend it to a triangle:
$$\Bar{V}\stackrel{\phi}\longrightarrow E_2\longrightarrow D\longrightarrow \Bar{V}[1].$$
Since $\Bar{V}, E_2\in {^{\perp_{>0}}C_2}$, {\color{black}applying the functor $\Hom_{\T}(-,C_2[i])$ to this triangle gives:
$$\T(E_2, C_2[i-1])\to \T(\Bar{V},C_2[i-1])\to \T(D,C_2[i])\to \T(E_2,C_2[i]).$$
For $i>1$, $\T(\Bar{V},C_2[i-1])\simeq \T(E_2,C_2[i])\simeq 0$, which implies $\T(D,C_2[i])\simeq 0$. When $i=1$, since $\phi:\Bar{V}\to E_2$ is a $\Prod(C_2)$-preenvelop of $\Bar{V}$, we know that $\T(E_2, C_2)\to \T(\Bar{V},C_2)$ is an epimorphism, leading us to conclude $D\in {{}^{\perp_{>0}}C_2}$.} By applying $j_*$ and $\H^0$ to the last triangle, there is an exact sequence
$$\H^{-1}(j_*(D))\longrightarrow\H^0(j_*(\Bar{V}))\stackrel{\H^0(j_*\phi)}\longrightarrow \H^0(j_*(E_2))\longrightarrow \H^0(j_*(D)).$$
Since $j_*({{}^{\perp_{>0}}C_2})\subseteq {{}^{\perp_{>0}}C}$, then $\H^{-1}(j_*(D))=0$. Moreover, 
\begin{align*}
    &\T(X, j_*(C_2))=0~~(\text{assumption})\\
    \Longrightarrow& \mathcal{H}(X, \H^0(j_*(C_2)))=0~~(\text{Lemma}~\ref{lem:useful}(1))\\
    \Longrightarrow& \mathcal{H}(X, \H^0(j_*(E_2)))=0~~({\color{black}\text{Lemma}~\ref{lem:useful}(3)~\text{and}~j_*~\text{respects products}})\\
    \Longrightarrow &\mathcal{H}(X, \H^0(j_*(\Bar{V})))=0\\
    \Longrightarrow &\mathcal{H}(X, \H^0(V[1]))=0\\
    \Longrightarrow &\T(X, V[1])=0~~(\text{Proposition}~\ref{rem:from condition tri}(2)~\text{and}~\text{Lemma}~\ref{lem:useful}(1))\\
    \Longrightarrow &\T(X, j_*j^*(U))=0.
\end{align*}

$(3)$
By $(1)$ and $(2)$, we have $\T(X, j_*j^*(U)[i])=0$ for $i=-1,0$. Applying the functor ${\T}(X,-)$ to the canonical triangle
$$i_*i^{!}(U)\longrightarrow U\longrightarrow j_*j^*(U)\longrightarrow i_*i^{!}(U)[1]$$
yields ${\T}(X,i_*i^{!}(U))\simeq {\T}(X,U)$. Thus (3) holds.
\end{proof}

To reach our final goal, the lemma below is crucial. Set

$$(\mathcal{S}_1, \mathcal{R}_1):=({ }^{\perp_0}\H^0_1(\mathcal{E}_1), \mathsf{Cogen}(\H^0_1(\mathcal{E}_1))).$$

\begin{Lem}\label{lem:main gluable pair}
$(\mathcal{S},\mathcal{R})$ is the glued torsion pair of 
$(\mathcal{S}_1,\mathcal{R}_1)$ and $(0, \mathcal{H}_2)$ along the recollement of hearts. 
\end{Lem}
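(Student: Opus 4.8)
The plan is to unwind the definition of the glued torsion pair $(\mathcal{S},\mathcal{R})$ along the recollement of hearts and to match it term-by-term with the torsion pair $(\mathcal{S},\mathcal{R})$ cogenerated by $\H^0(\mathcal{E})$, where $\mathcal{E}=\Prod(X)\subseteq\Prod(C)$ is the image of $\mathcal{E}_1$ under gluing. Writing $(\mathcal{S}',\mathcal{R}')$ for the glued torsion pair of $(\mathcal{S}_1,\mathcal{R}_1)$ and $(0,\mathcal{H}_2)$, the recollement-of-hearts functors $I^*,I^!,J^*$ give, by the definition of the glued torsion pair,
$$\mathcal{S}'=\{H\in\mathcal{H}\mid I^*(H)\in\mathcal{S}_1,\ J^*(H)\in 0\},\qquad \mathcal{R}'=\{H\in\mathcal{H}\mid I^!(H)\in\mathcal{R}_1,\ J^*(H)\in\mathcal{H}_2\}.$$
Since $J^*$ is essentially surjective with target $\mathcal{H}_2$, the condition $J^*(H)\in\mathcal{H}_2$ is automatic on the right, so $\mathcal{R}'=\{H\in\mathcal{H}\mid I^!(H)\in\mathcal{R}_1\}$, while on the left the condition $J^*(H)=0$ forces $H$ to lie in the image of $I_*$.

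\textbf{Key steps, in order.}
First I would identify $\mathcal{E}$ explicitly: since $X$ is a summand of $C_1$, the glued summand sits inside $\Prod(U)\subseteq\Prod(C)$, and I would use the translation $\T(X_0,-)\simeq\mathcal{H}(\H^0(X_0),\H^0(-))$ from Lemma~\ref{lem:useful}(1),(3) to pass between $\Hom$-orthogonality in $\T$ and $\Hom$-orthogonality in the heart $\mathcal{H}$. Second, for the torsion-free class I would show $\mathcal{R}=\mathsf{Cogen}(\H^0(\mathcal{E}))$ equals $\mathcal{R}'$ by proving that $H$ embeds in a product of copies of $\H^0(\mathcal{E})$ if and only if $I^!(H)\in\mathcal{R}_1$; here the compatibility $I^!\circ\H^0\simeq\H^0_1\circ i^!$ (from the recollement of hearts, $I^!=\H^0_1\circ i^!\circ\epsilon$) lets me transport the cogeneration condition across the recollement, and the assumption $J^*(\mathcal{E})=0$ (because $\mathcal{E}$ comes only from the left object $C_1$, so $j^*(X)$ contributes nothing new to the torsion pair) makes the $\mathcal{H}_2$-component inert, matching the pair $(0,\mathcal{H}_2)$ on the right. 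Third, for the torsion class I would identify $\mathcal{S}={}^{\perp_0}\H^0(\mathcal{E})$ with $\mathcal{S}'$: an object $H$ satisfies $\mathcal{H}(H,\H^0(\mathcal{E}))=0$ iff its $J^*$-part vanishes and its $I^*$-part lies in $\mathcal{S}_1={}^{\perp_0}\H^0_1(\mathcal{E}_1)$, again via the adjunction $\mathcal{H}(I^*(-),-)\simeq\mathcal{H}_1(-, I^!(-))$-type identities together with Lemma~\ref{lem:for glue torsion pair}, which controls exactly the mixed terms $\T(H,j_*j^*(U))$ and $\T(H,i_*i^!(U))$ that arise when testing orthogonality against the glued object $U$.

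\textbf{Main obstacle.}
The delicate point will be Step three, verifying that $J^*(H)=0$ is the correct left-hand condition, i.e. that no part of $\mathcal{E}$ survives on the $\T_2$-side. Here is where Lemma~\ref{lem:for glue torsion pair} does the real work: I expect to need its part (2) to show that $\T(H,j_*(C_2))=0$ propagates to $\T(H,j_*j^*(U))=0$, and part (3) to convert vanishing against $U_0\in\Prod(U)$ into vanishing against $i_*i^!(U_0)$, thereby reducing orthogonality against $\H^0(\mathcal{E})$ in $\mathcal{H}$ to orthogonality against $\H^0_1(\mathcal{E}_1)$ in $\mathcal{H}_1$ after applying $I^*$. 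The rest is the routine check that both candidate pairs satisfy the two torsion-pair axioms and hence, being determined by their torsion-free classes inside the abelian heart, must coincide.
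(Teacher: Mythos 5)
Your overall strategy --- compare torsion classes, translate orthogonality in the heart $\mathcal{H}$ into orthogonality in $\T$ via Lemma~\ref{lem:useful}, and control the mixed terms with Lemma~\ref{lem:for glue torsion pair} --- is exactly the paper's strategy, but your proof breaks at the very first step: you misidentify the class $\mathcal{E}$ that defines $(\mathcal{S},\mathcal{R})$. You take $\mathcal{E}$ to be ``the image of $\mathcal{E}_1$ under gluing'', a subcategory of $\Prod(U)$, and you explicitly posit $J^*(\mathcal{E})=0$ ``because $\mathcal{E}$ comes only from the left object $C_1$''. In the paper one sets $\mathcal{E}:=\Prod(j_*(C_2))\oplus\Prod(U_X)$: since $C_2$ is not mutated at all, the \emph{whole} of $j_*(C_2)$ is kept and must belong to $\mathcal{E}$. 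The right-hand pair $(0,\mathcal{H}_2)$ is not the trace of an ``inert'' or empty contribution from $\T_2$; it is the cosilting torsion pair cogenerated by $\H^0_2(\Prod(C_2))$ (everything is torsion-free because $\H^0_2(C_2)$ is an injective cogenerator of $\mathcal{H}_2$, by Lemma~\ref{lem:useful}(2)), i.e.\ it encodes mutation at \emph{all} of $\Prod(C_2)$. With your choice of $\mathcal{E}$ the lemma is simply false: membership in ${}^{\perp_0}\H^0(\Prod(U_X))$ does not force $J^*(H)=0$, whereas the glued torsion class of $(\mathcal{S}_1,\mathcal{R}_1)$ and $(0,\mathcal{H}_2)$ requires it. Concretely, in Example~\ref{example: cosilting complex} and Example~\ref{example:right mutation} one has $U_X\simeq I_2$, $j_*(C_2)\simeq I_3$ and $\mathcal{H}\simeq A\Mod$; the simple module $S_3$ satisfies $\Hom_A(S_3,I_2^J)=0$ for every set $J$, so it lies in your torsion class, yet $j^*(S_3)\neq 0$, so $J^*(S_3)\neq 0$ and $S_3$ is not in the glued torsion class. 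This is also why the mutation in that example happens at $\Prod(I_2\oplus I_3)$ and not at $\Prod(I_2)$.

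The error propagates through the rest of your outline: the hypothesis $\T(H,j_*(C_2))=0$ needed to invoke Lemma~\ref{lem:for glue torsion pair}(2),(3) --- which you correctly identify as doing the real work --- has no source in your setup, because orthogonality to your $\mathcal{E}$ never produces it. In the paper it is available on \emph{both} sides of the desired equivalence: on one side because $j_*(C_2)\in\mathcal{E}$, so $H\in{}^{\perp_0}\H^0(\mathcal{E})$ gives $\T(H,j_*(C_2))=0$ via Lemma~\ref{lem:useful}(1); on the other because $J^*(H)=0$ is equivalent to $\T_2(j^*(H),C_2)=0$, hence to $\T(H,j_*(C_2))=0$, via Lemma~\ref{lem:useful}(2) and adjunction. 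Once $\mathcal{E}$ is corrected, your plan becomes the paper's proof: $H\in{}^{\perp_0}\H^0(\mathcal{E})$ iff $\T(H,j_*(C_2))=0$ and $\T(H,\Prod(U_X))=0$; $J^*(H)=0$ iff $\T(H,j_*(C_2))=0$; $I^*(H)\in\mathcal{S}_1$ iff $\T(H,i_*i^!(\Prod(U_X)))=0$; and Lemma~\ref{lem:for glue torsion pair}(3) closes the loop. Note also that your separate verification of the torsion-free classes via $I^!$ is unnecessary: two torsion pairs in $\mathcal{H}$ with the same torsion class coincide, so comparing the torsion classes suffices, which is all the paper does.
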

\begin{proof}
Set $\mathcal{E}:=\Prod(j_*(C_2))\oplus \Prod(U_X)$. 
It suffices to prove that 
$${ }^{\perp_0}\H^0(\mathcal{E})=\{X\in\mathcal{H}~|~I^*(X)\in\mathcal{S}_1, J^*(X)=0\}.$$
Note that there are equivalences
\begin{align*}
   X\in { }^{\perp_0}\H^0(\mathcal{E})\Leftrightarrow & \H(X, \mathcal{H}^0(\mathcal{E}))=0\\
   \Leftrightarrow &\T(X, \mathcal{E})=0 ~({\rm Lemma}~\ref{lem:useful}(1))\\
   \Leftrightarrow & \T(X, j_*(C_2))=0~\text{and}~\T(X, U_X)=0
\end{align*}
and
\begin{align*}
I^*(X)\in \mathcal{S}_1\Leftrightarrow &\H^0_1\circ i^*\circ \epsilon(X)\in \mathcal{S}_1\\
\Leftrightarrow& \H^0_1\circ i^*(X)\in \mathcal{S}_1\\
\Leftrightarrow&\mathcal{H}_1(\H^0_1(i^*(X)),\H^0_1(\mathcal{E}_1))=0\\
\Leftrightarrow&\T_1(i^*(X),\mathcal{E}_1)=0~~({\color{black}\text{Lemma}~\ref{lem:useful}(3)})\\
\Leftrightarrow&\T(X, i_*(\mathcal{E}_1))=0  \\  
\Leftrightarrow& \T(X, i_*i^!(\Prod(U_X))=0
\end{align*}
and 
\begin{align*}
    J^*(X)=0\Leftrightarrow &\H^0_2\circ j^*\circ\epsilon(X)=0\\
    \Leftrightarrow & \H^0_2(j^*(X))=0\\
    \Leftrightarrow &\mathcal{H}_2(\H^0_2(j^*(X)), \H^0_2(C_2))=0\\
    \Leftrightarrow &\T_2(j^*(X), C_2)=0~~(\text{Lemma}~\ref{lem:useful}(2))\\
    \Leftrightarrow &\T(X, j_*(C_2))=0.
\end{align*}
Thanks to Lemma~\ref{lem:for glue torsion pair}(3), we know that $X\in { }^{\perp_0}\H^0(\mathcal{E})$ if and only if $I^*(X)\in\mathcal{S}_1$ and $J^*(X)=0$. This finishes the proof.
\end{proof}

\begin{proof}[The proof of Theorem~\ref{thm:muta at left}]
From Proposition~\ref{mutation pair} and Theorem~\ref{thm:glue pure-inj cosilting}, $C'_1$ is pure-injective and the pair $(C'_1, C_2)$ is gluable. Thus $C'$ is well-defined and is a pure-injective cosilting object.

Since $X$ is a direct summand of $C_1$, we can assume $X\oplus Y\simeq C_1$. By \cite[Theorem 6.13]{A19}, $(^{\perp_{<0}}C_2,^{\perp_{>0}}C_2)$ has an adjacent co-$t$-structure. By the proof of Proposition \ref{prop:sufficient glue}, $X$ and $Y$ have the following triangles:
$$V_X\longrightarrow i_{*}(X)\longrightarrow U_X\longrightarrow V_X[1]$$
$$V_Y\longrightarrow i_{*}(Y)\longrightarrow U_Y\longrightarrow V_Y[1]$$
where $V_X, V_Y$ are in $j_{*}(^{\perp_{\geq 0}}C_2)$ and  $U_X, U_Y$ are in $(j_{*}(^{\perp_{\geq 0}}C_2))^{\perp}$. Then $C_1$ has the following decomposition triangle:
$$V\longrightarrow i_{*}(C_1)\longrightarrow U\longrightarrow V[1]$$
where $V\simeq V_X\oplus V_Y$ and $U\simeq U_X\oplus U_Y$, which satisfies $(*)$ in Definition \ref{def:gluable cosilting}.

We claim  $C'$ is a right mutation of $C$ with respect to $\mathcal{E}=\Prod(j_*(C_1)\oplus U)$. For this reason, let $(\mathcal{S}, \mathcal{R}):=({ }^{\perp_0}\H^0(\mathcal{E}), \mathsf{Cogen}(\H^0(\mathcal{E})))$.

By Theorem \ref{thm:glue pure-inj cosilting}, $C$ is a pure-injective cosilting object. \textcolor{black}{By Lemma \ref{lem:HRS reco} and Lemma \ref{lem:main gluable pair} below, the right HRS-tilt of $(^{\perp_{<0}}C, ^{\perp_{>0}}C)$ with respect to $(\mathcal{S}, \mathcal{R})$ is just $(^{\perp_{<0}}C', ^{\perp_{>0}}C')$. Hence $(\mathcal{S}, \mathcal{R})$ is a cosilting torsion pair. By Lemma \ref{mutation pair}, we have that $C'$ is a right mutation of $C$ with respect to $\mathcal{E}$.}
\end{proof}

\begin{Bsp}\label{example:right mutation}
Following Example~\ref{example: cosilting complex},  
we know that the injective module $D(B)$, as a stalk complex, is a cosilting complex in $\D(B)$. And $D(B)$ admits a right mutation at $\Prod(I_2)$. Indeed, there exists a short exact sequence
$$0\longrightarrow P_2\longrightarrow I_2\stackrel{\phi}{\longrightarrow} I_1\to 0,$$
where $\phi$ is a $\Prod(I_2)$-precover of $I_1$. A right mutation of $D(B)$ at $\Prod(I_2)$ is the stalk complex of the regular module $B=P_2\oplus I_2$. Moreover, there is a triangle
$$I_1[-1]\longrightarrow P_2 \stackrel{\varphi}{\longrightarrow} P_1\longrightarrow I_1$$
where $\varphi$ is a $\Prod(P_2)$-precover of $P_1$. A right mutation of $B=P_2\oplus P_1$ at $\Prod(P_2)$ is the complex $I_1[-1]\oplus P_2$.

Analogous to the discussion in Example~\ref{example: cosilting complex}, the glued cosilting complexes of the pairs $(D(B),k)$ and $(B,k)$ are $D(A)$ and $S_2\oplus I_2\oplus I_3$, respectively. According to Theorem \ref{thm:muta at left}, $S_2\oplus I_2\oplus I_3$ is a right mutation of $D(A)$ at $\Prod(I_2\oplus I_3)$.
\end{Bsp}

\subsubsection{Mutation at right side}
When considering the right side, the situation becomes intricate. Indeed, for compatibility to be possible, the mutation needs to satisfy a special assumption based on our proof.

\begin{Theo}\label{thm:muta at right}
Let $\mathcal{E}_2=\Prod(\mathcal{E}_2)$ be a subcategory of $\Prod(C_2)$. Assume $C_2$ admits a right mutation $C'_2$ with respect to $\mathcal{E}_2$ and such that  \textcolor{black}{$V[1]\in j_*(^{\perp_{> 0}}C'_2)$}. Denote by $C'$ the glued cosilting object of $C_1$ and $C'_2$. Then $C'$ is a right mutation of $C$.
\end{Theo}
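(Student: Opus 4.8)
The plan is to run the same machine as in Theorem~\ref{thm:muta at left}, but with the two outer categories interchanged, and to pin down exactly where the hypothesis $V[1]\in j_*({}^{\perp_{>0}}C'_2)$ is spent. First I would record that $C'$ is well-defined: by Proposition~\ref{mutation pair} the mutation $C'_2$ is again a pure-injective cosilting object, so by Theorem~\ref{thm:glue pure-inj cosilting} the pair $(C_1,C'_2)$ is gluable and $C'$ is a pure-injective cosilting object, while $C$ is pure-injective for the same reason. By Proposition~\ref{mutation pair} it then suffices to produce a class $\mathcal{E}=\Prod(\mathcal{E})\subseteq\Prod(C)$ for which $(\mathcal{S},\mathcal{R}):=({ }^{\perp_0}\H^0(\mathcal{E}),\mathsf{Cogen}(\H^0(\mathcal{E})))$ is a cosilting torsion pair realizing the $t$-structure of $C'$ as its right HRS-tilt.

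The candidate is $\mathcal{E}:=\Prod(U)\oplus\Prod(j_*(\mathcal{E}_2))$, which keeps all of $U$ (because $C_1$ is left unchanged, and $i^!(U)\simeq C_1$) and keeps the $\mathcal{E}_2$-part on the quotient side. Writing $(\mathcal{S}_2,\mathcal{R}_2):=({ }^{\perp_0}\H^0_2(\mathcal{E}_2),\mathsf{Cogen}(\H^0_2(\mathcal{E}_2)))$ for the cosilting torsion pair of the mutation of $C_2$, the key step, which is the analog of Lemma~\ref{lem:main gluable pair}, is to show that $(\mathcal{S},\mathcal{R})$ is the glued torsion pair of the trivial pair $(0,\mathcal{H}_1)$ (corresponding to leaving $C_1$ fixed) and $(\mathcal{S}_2,\mathcal{R}_2)$. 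Since a torsion pair is determined by its torsion class, I would verify for $X\in\mathcal{H}$ that $X\in{ }^{\perp_0}\H^0(\mathcal{E})$ iff $I^*(X)=0$ and $J^*(X)\in\mathcal{S}_2$. Using Lemma~\ref{lem:useful}(1) together with $\mathcal{E}\subseteq{}^{\perp_{>0}}C$, the left-hand side unwinds to $\T(X,U)=0$ and $\T(X,j_*(\mathcal{E}_2))=0$; the second condition is equivalent to $J^*(X)\in\mathcal{S}_2$ via the adjunction $j^*\dashv j_*$ and Lemma~\ref{lem:useful}(3), exactly as in Lemma~\ref{lem:main gluable pair}.

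The hard part, and the place where the extra hypothesis is essential, is the conditional equivalence $\T(X,U)=0\iff I^*(X)=0$ under the standing assumption $J^*(X)\in\mathcal{S}_2$. Applying $\T(X,-)$ to the canonical triangle $i_*i^!(U)\to U\to j_*j^*(U)\to i_*i^!(U)[1]$ and using $i^!(U)\simeq C_1$ (Proposition~\ref{rem:from condition tri}(1)) reduces this to two facts: that $\T(X,i_*(C_1))\simeq\mathcal{H}_1(I^*(X),\H^0_1(C_1))$ vanishes iff $I^*(X)=0$ (by Lemma~\ref{lem:useful}(2), since $\H^0_1(C_1)$ is an injective cogenerator of $\mathcal{H}_1$), and that the correction terms $\T(X,j_*j^*(U)[k])$ for $k=-1,0$ vanish whenever $J^*(X)\in\mathcal{S}_2$. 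For the latter I would set $\bar V:=j^*(U)$, so that $V[1]=j_*(\bar V)$; the hypothesis says precisely $\bar V\in{}^{\perp_{>0}}C'_2=\mathcal{R}_2*\Y_2[-1]$, the coaisle of the right HRS-tilt. A short cohomology computation on the triangle witnessing $\bar V\in\mathcal{R}_2*\Y_2[-1]$ then yields $\H^0_2(\bar V)\in\mathcal{R}_2$ (and likewise for $\bar V[-1]$), so by Lemma~\ref{lem:useful}(1) and the orthogonality $\mathcal{H}_2(\mathcal{S}_2,\mathcal{R}_2)=0$ of the torsion pair one gets $\T(X,j_*\bar V[k])\simeq\mathcal{H}_2(J^*(X),\H^0_2(\bar V[k]))=0$. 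This is exactly the step that collapses without the hypothesis: for a general $\bar V\in{}^{\perp_{>0}}C_2$ the class $\H^0_2(\bar V)$ may acquire a nonzero torsion part in $\mathcal{S}_2$, and then the Hom need not vanish, which is why compatibility fails in general on the right.

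Finally, once $(\mathcal{S},\mathcal{R})$ is identified as the glued torsion pair, Lemma~\ref{lem:HRS reco} gives that the right HRS-tilt of $({}^{\perp_{<0}}C,{}^{\perp_{>0}}C)$ at $(\mathcal{S},\mathcal{R})$ is the $t$-structure glued from the trivial tilt of $C_1$ and the tilt of $C_2$ at $(\mathcal{S}_2,\mathcal{R}_2)$, i.e.\ from the $t$-structures of $C_1$ and $C'_2$; by Theorem~\ref{thm:glue cosilting} this is precisely $({}^{\perp_{<0}}C',{}^{\perp_{>0}}C')$. Hence $(\mathcal{S},\mathcal{R})$ is a cosilting torsion pair, and Proposition~\ref{mutation pair} concludes that $C'$ is the right mutation of $C$ with respect to $\mathcal{E}$.
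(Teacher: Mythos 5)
Your proposal is correct and follows essentially the same route as the paper's proof: the same choice $\mathcal{E}=\Prod(j_*(\mathcal{E}_2)\oplus U)$, the same key identification of $({}^{\perp_0}\H^0(\mathcal{E}),\mathsf{Cogen}(\H^0(\mathcal{E})))$ with the torsion pair glued from $(0,\mathcal{H}_1)$ and $(\mathcal{S}_2,\mathcal{R}_2)$, the hypothesis $V[1]\in j_*({}^{\perp_{>0}}C'_2)$ spent in exactly the same place (decomposing $\bar V$ via ${}^{\perp_{>0}}C'_2=\mathsf{Cogen}(\H^0_2(\mathcal{E}_2))*{}^{\perp_{>0}}C_2[-1]$ to kill $\T(X,V[1])$), and the same conclusion via Lemma~\ref{lem:HRS reco} and Proposition~\ref{mutation pair}. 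The only cosmetic difference is that you run the torsion-freeness computation inside $\mathcal{H}_2$ using the adjunction $(j^*,j_*)$, where the paper applies $\H^0 j_*$ and the heart-level adjunction $(J^*,J_*)$.
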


{\color{black}For convenience, we fixed the notation under the assumption of the theorem above:
\begin{align*}
(\mathcal{S}_2, \mathcal{R}_2)&:=({ }^{\perp_0}\H^0_2(\mathcal{E}_2), \mathsf{Cogen}(\H^0_2(\mathcal{E}_2)))\\
(\mathcal{S}_r, \mathcal{R}_r)&:~\text{the glued pair of}~(0, \mathcal{H}_1)~\text{and}~(\mathcal{S}_2, \mathcal{R}_2)
\end{align*}}

\begin{proof}[The proof of Theorem~\ref{thm:muta at right}]

\textcolor{black}{Let $\mathcal{E}=\Prod(j_*(\mathcal{E}_2)\oplus U)$, we want to prove $C'$ is a right mutation of $C$ with respect to $\mathcal{E}$ under the given hypothesis. Let $(\mathcal{S}, \mathcal{R}):=({ }^{\perp_0}\H^0(\mathcal{E}), \mathsf{Cogen}(\H^0(\mathcal{E})))$. Since $C'_2$ is a right mutation of $C_2$ with respect to $\mathcal{E}_2$, we have $^{\perp_{> 0}}C'_2=\mathsf{Cogen}(\mathcal{E}_2)*{^{\perp_{>0}}C_2}[-1]\subseteq {^{\perp_{>0}}C_2}$ by definition.}



We claim that $\mathcal{S}=\mathcal{S}_r$. \textcolor{black}{In particular the torsion pair $(\mathcal{S},\mathcal{R})$ is a cosilting torsion pair.}

Let $X\in\mathcal{H}$. By a proof similar to that of Lemma~\ref{lem:main gluable pair}, we have:
    \begin{align*}
        I^*(X)=0 &~\text{if and only if} ~\T(X, i_*(C_1))=0, \\
        J^*(X)\in \mathcal{S}_2 &~\text{if and only if}~ \T(X,j_*(\mathcal{E}_2))=0,
    \end{align*}
and 
\begin{align*}
X\in { }^{\perp_0}\H^0(\mathcal{E})&\Leftrightarrow \T(X, \mathcal{E})=0 \\
&\Leftrightarrow \T(X, j_*(\mathcal{E}_2))=0~\text{and}~\T(X, U)=0.
\end{align*}

\textcolor{black}{~~($\mathcal{S}\subseteq \mathcal{S}_r$)~~}Assume $X\in\mathcal{S}$, that is, $\T(X, \mathcal{E})=0$. Then $\T(X, j_*(\mathcal{E}_2))=0~\text{and}~\T(X, U)=0$. By 
applying the functor $\T(X,-)$ to the triangle $(*)$(see Definition~\ref{def:gluable cosilting}) 
$$V\longrightarrow i_{*}(C_1)\longrightarrow U\longrightarrow V[1],~V\in j_{*}(^{\perp_{\geq 0}}C_2),~U\in (j_{*}(^{\perp_{\geq 0}}C_2))^{\perp}$$
we obtain an exact sequence
$$\T(X, V)\longrightarrow \T(X, i_*(C_1))\longrightarrow \T(X, U)\longrightarrow \T(X, V[1]).$$
Since $X\in \mathcal{H}$, we have $j^*(X)\in ^{\perp_{< 0}}C_2$, so the first item is zero, hence $\T(X, i_*(C_1))=0$. Thus we find $I^*(X)=0, J^*(X)\in \mathcal{S}_2$,  and consequently, $X\in \mathcal{S}_r$.

\textcolor{black}{~~($\mathcal{S}_r\subseteq \mathcal{S}$)~~}For the reverse inclusion, let $X\in\mathcal{S}_r$, we need to show $X\in \mathcal{S}$. \textcolor{black}{By the hypothesis}, $V[1]$ is also in $j_*(^{\perp_{> 0}}C'_2)$, so we can express $V[1]=j_*(V')$ for some $V'\in {^{\perp_{>0}}C'_2}$. 
Since $^{\perp_{>0}}C'_2= \mathsf{Cogen}(H_2^0(\mathcal{E}_2))*{^{\perp_{>0}}}C_2[-1]$, $V'$ admits a triangle:
$$W\longrightarrow V'\longrightarrow Z\longrightarrow W[1]$$
with $W\in \mathsf{Cogen}(H_2^0(\mathcal{E}_2))$ and $Z\in {^{\perp_{>0}}}C_2[-1]$.
The triangle above induces an exact sequence
$$\H^0j_*(Z[-1])\to\H^0j_*(W)\longrightarrow \H^0j_*(V')\longrightarrow \H^0j_*(Z)\longrightarrow \H^0j_*(W[1])$$
where $\H^0j_*(W)\in J_*(\mathsf{Cogen}(\H_2^0(\mathcal{E}_2)))$ and $\H^0j_*(Z)=0=\H^0j_*(Z[-1])$. Hence $\H^{0}(V[1])=\H^{0}(j_*(V'))\simeq \H^0j_*(W)\in J_*(\mathsf{Cogen}(\H_2^0(\mathcal{E}_2)))$. Thus, we have 
$$\T(X, V[1])\simeq \HH(X, \H^0(V[1]))\simeq \HH_2(J^*(X), W)=0$$
where the first isomorphism is from Lemma~\ref{lem:useful}(1), and the second is determined by the adjoint pair $(J^*, J_*)$, and the third follows because $J^*(X)\in \mathcal{S}_2$ and $W\in \mathcal{R}_2$.

By applying the functor ${\rm Hom}_{\T}(X,-)$ to the triangle $(*)$ in Definition \ref{def:gluable cosilting}, we find ${\rm Hom}_{\T}(X,U)=0$. Hence, $X\in \mathcal{S}$. 
Therefore, by Lemma \ref{lem:HRS reco}, we have that $(\mathcal{S},\mathcal{R})$ is a cosilting torsion pair.

By Theorem \ref{thm:glue pure-inj cosilting}, $C$ is a pure-injective cosilting object. Thus, we have shown that $(\mathcal{S},\mathcal{R})$ is a cosilting torsion pair. By Proposition \ref{mutation pair},  $C'$ is a right mutation of $C$.
\end{proof}

When comparing Theorem~\ref{thm:muta at right} with Theorem~\ref{thm:muta at left}, there is an additional condition ``$V[1]\in j_*(^{\perp_{> 0}}C'_2)$". This condition ensures that the gluable pairs $(C_1, C_2)$ and $(C_1,C'_2)$ share the same triangle (see Definition~\ref{def:gluable cosilting}), which significantly contributes to the proof.

\begin{Bsp}
Using the notation in Example~\ref{example: cosilting complex}, let $e=e_1+e_2$ and define  $C=A/AeA$. Then, there is a ladder 
    $$\xymatrix{
\D(C)
\ar@<-2.25ex>[rrr]
\ar@<0.75ex>[rrr]|-{i_{*}=C\otimes_B^{L}-}
&&&
\ar@<-2.25ex>[lll]
\ar@<0.75ex>[lll]|-{\RHom_{A}(C,-)}
\D(A)
\ar@<-2.25ex>[rrr]|-{j^{\#}}
\ar@<0.75ex>[rrr]
&&&
\ar@<-2.25ex>[lll]|-{Ae\otimes^{L}}
\ar@<0.75ex>[lll]|-{j_*=\RHom(eA,-)}
\D(B)}$$

The stalk complexes $C$ and $D(B)$ are cosilting objects in $\D(C)$ and $\D(B)$, respectively. Note that $i_*(C)\simeq S_3=P_3$ fits into the triangle
$$I_2[-1]\longrightarrow S_3\longrightarrow P_1\longrightarrow I_2$$
where $I_2[-1]\in j_*(^{\perp_{\geq 0}}D(B))$ and $P_1\in (j_*(^{\perp_{\geq 0}}D(B)))^{\perp}$. Consequently, the pair $(C, D(B))$ of cosilting objects is gluable along the upper recollement (the first three rows of the ladder). And the glued cosilting object is $j_*(D(B))\oplus P_1\simeq D(A)$ (see Theorem~\ref{thm:glue cosilting}). 

Additionally, we have $I_2[-1]\simeq j_*(_BI_2)[-1]\in j_*(^{\perp_{\geq 0}}B)$ and $P_1\in (j_*(^{\perp_{\geq 0}}B))^{\perp}$. With respect to the triangle mentioned above, the pair $(C, B)$ of cosilting objects is also gluable along the upper recollement. The resulting glued cosilting object is $I_3\oplus I_2\oplus S_2$.

Since $B$ is a right mutation of $D(B)$ (see Example~\ref{example:right mutation}) and 
$I_2\in j_*(^{\perp_{> 0}}B)$, the gluable pairs $(C, D(B))$ and $(C, B)$ satisfy the conditions in Theorem~\ref{thm:muta at right}. Therefore, the glued cosilting object $I_3\oplus I_2\oplus S_2$ associated with $(C, B)$ is a right mutation of the glued cosilting object $D(A)$ corresponding to $(C, D(B))$ in $\D(A)$. This is consistent with the conclusion in Example~\ref{example:right mutation}.
\end{Bsp}

\section{Declarations}

{\bf Funding} $\quad$ Yaohua Zhang is supported by the National Natural Science Foundation of China (No. 12401044).~\\

{\bf Availability of data and materials} $\quad$ All data generated or analyzed during this study are included in this published
article.~\\

{\bf Competing interests} $\quad$ The authors declare no competing interests.


\end{document}